\documentclass[12pt]{article}
\usepackage[T1]{fontenc}

\usepackage{amssymb, amscd, amsmath, amsfonts, amsthm}
\usepackage{latexsym}
\usepackage{extarrows}

\usepackage{graphicx,subfigure,url}
\usepackage{epstopdf}

\usepackage{color}
\usepackage{tabu}

\usepackage{tikz}

\usepackage{mathtools}
\usepackage{enumitem}
\usepackage{pifont}
\usepackage{bbm}
\usepackage{hyperref}

\textwidth 16.2cm
\addtolength{\topmargin}{-45pt}
\addtolength{\textheight}{90pt}  

\hoffset -1cm

\numberwithin{equation}{section}
\newtheorem{problem}{Problem}
\newtheorem{theo}[equation]{Theorem}
\newtheorem{remark}[problem]{Remark}

\newtheorem{defin}[equation]{Definition}
\newtheorem{prop}[equation]{Proposition}
\newtheorem{cor}[equation]{Corollary}
\newtheorem{lema}[equation]{Lemma}

\usepackage{xcolor}\definecolor{myblue}{RGB}{0,0,255}

\makeatletter

\makeatother

\begin{document}

\date{}
\title{Indecomposability of the median hypersimplex and \\  polytopality of the hemi-icosahedral Bier sphere} 
\author{
Filip D. Jevti\'{c} \\{\small Mathematical Institute}\\[-2mm] {\small SASA,  Belgrade}
\and
Marinko \v{Z}. Timotijevi\'{c}\\ {\small Faculty of Science}\\[-2mm] {\small University of Kragujevac}
\and
Rade T. \v Zivaljevi\' c\\ {\small Mathematical Institute}\\[-2mm] {\small SASA, Belgrade}
}
\maketitle

\begin{abstract}
We prove that the median hypersimplex $\Delta_{2k,k}$ is Minkowski indecomposable, i.e.\ it cannot be expressed as a non-trivial Minkowski sum $\Delta_{2k,k} = P+Q$, where $P\neq \lambda\Delta_{2k,k}\neq Q$. Since $\Delta_{2k,k}$ is a deformed permutahedron, we obtain as a corollary that $\Delta_{2k,k}$ represents a ray in the submodular cone (the deformation cone of the permutahedron). Building on the previously developed geometric methods and extensive computer search, we exhibit a twelve vertex, $4$-dimensional  polytopal realisation of the Bier sphere of the hemi-icosahedron, the vertex minimal triangulation of the real projective plane.
\end{abstract}

\section{Introduction}

The \emph{wall-crossing relations}, more explicitly the equalities (i) and inequalities (ii), listed in Proposition \ref{prop:def-cone}, have recently found numerous applications in the theory of convex polytopes. 
These applications fall into two distinct, but closely related classes of problems: 
\begin{itemize}
    \item[(a)] the theory of deformations (deformation cones) of convex polytopes;
    \item[(b)] the existence of polytopal realizations of simplicial (polyhedral) spheres.
\end{itemize}
Our first main result (Theorem \ref{thm:main-1}) belongs to the first class. It claims  that the deformation cone of the median hypersimplex $\Delta_{2k,k}$ is one-dimensional, which implies that it 
represents a ray in the deformation cone of the permutahedron. Our second main result (Theorem \ref{thm:iko}) is in the second class. It says that the Bier sphere of the hemi-icosahedron admits a polytopal realization in $\mathbb{R}^5$. 

Bier spheres, see the Appendix (Section \ref{sec:app}), 
play an important role in both of these results. In the first they provide a simplicial refinement of the normal cone of $\Delta_{2k,k}$, needed for the application of Proposition \ref{prop:def-cone}, while in the second they are the central objects in an ongoing project of studying polytopality of combinatorial spheres.

\section{Deformation cones of polytopes}

Let $P\subset \mathbb{R}^d$ be a convex polytope and let $\mathcal{N}(P)$ be the corresponding normal fan. 
The \emph{deformation cone} $Def(P)$ is the convex cone parameterizing all \emph{admissible deformations} of $P$. More precisely, a convex polytope $Q\subset \mathbb{R}^d$
is an admissible deformation of $P$ (or simply a deformation of $P$) if the normal fan of $\mathcal{N}(Q)$ of $Q$ is a coarsening of $\mathcal{N}(P)$. 
The interior of $Def(P)$ is known \cite{mcmullen-1973} as the \emph{type cone} of $P$; it consists of all polytopes whose normal fan is exactly $\mathcal{N}(P) $. 

\medskip
As an immediate consequence of the definition we obtain that $Def(P)$ is closed under translations, $Q\mapsto Q+v$, dilations, $Q\mapsto \lambda Q\, (\lambda>0)$, and
Minkowski sums, $A, B \in Def(P) \Rightarrow A+B\in Def(P)$.

\medskip
The deformation cone is indeed a convex cone (in an appropriate parameter space) which is visible from the following list of statements equivalent to $Q\in Def(P)$.

\begin{enumerate}
  \item[{\rm (1)}]  $Q$ is obtained from $P$ by parallel displacement of its facets, without passing through a vertex \cite{postnikov-2009,postnikov-2008,castillo_liu};  
 \item[{\rm (2)}]  $Q$ is obtained from $P$ by perturbing the vertices so that the directions of all edges are preserved \cite{postnikov-2009,postnikov-2008};
 \item[{\rm (3)}]  $Q$ is a polytope whose support functionial is a convex piecewise
linear continuous function supported on the normal fan of $P$ \cite[Section 6.1]{cox-2011}\cite[Section 9.5]{loera-2010};
\item[{\rm (4)}] $Q$ is a Minkowski summand of a dilate of P \cite{meyer-1974,shepard-1963}. 
\end{enumerate}

The lineality subspace $\mathcal{L}(P)$ of $Def(P)$, that is the linear subspace of all apices of $Def(P)$, 
is $d$-dimensional (it is easily identified as $\mathcal{L}(P) = \{P+v \mid v\in \mathbb{R}^d\}$). The quotient 
\begin{equation}\label{eq:ess}
  Def_{ess}(P) := Def(P)/\mathcal{L}(P)
\end{equation}
is a pointed cone called the \emph{essential deformation cone} of $P$. 
By a slight abuse of language we often call (\ref{eq:ess}) the deformation cone of $P$. In particular the rays of $Def(P)$ are the rays of (\ref{eq:ess}),
$Def(P)$ is simplicial if (\ref{eq:ess}) is simplicial, etc.

\begin{prop}\label{prop:rays} {\rm (\cite{PPP22})}
A polytope $P$ has a one dimensional (essential) deformation cone if and only if it is Minkowski indecomposable. In particular 
the rays of $Def_{ess}(P)$ are spanned by the Minkowski indecomposable deformations of $P$.
\end{prop}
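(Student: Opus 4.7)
The plan is to invoke characterization~(4) from the list preceding the proposition, which identifies admissible deformations of $P$ with Minkowski summands of dilates of $P$. This brings the combinatorial notion of Minkowski indecomposability into direct contact with the cone-theoretic size of $Def(P)$.

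For the forward implication, assuming $P$ is Minkowski indecomposable, I would take an arbitrary $Q\in Def(P)$ and use (4) to write $\lambda P = Q + R$ for some $\lambda>0$ and polytope $R$. Dividing by $\lambda$ presents $P$ itself as a Minkowski sum, and indecomposability then forces $\tfrac{1}{\lambda}Q = \mu P + v$ for some $\mu\geq 0$ and $v\in \mathbb{R}^d$, i.e.\ $Q = \lambda\mu\,P + \lambda v$. Passing to $Def_{ess}(P) = Def(P)/\mathcal{L}(P)$ collapses the translation ambiguity, so the class $[Q]$ is a non-negative multiple of $[P]$ and the essential cone is a single ray.

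The converse is more direct. Given any Minkowski decomposition $P = A+B$, both summands lie in $Def(P)$ by (4). If $Def_{ess}(P)$ is one-dimensional, their classes are non-negative multiples of $[P]$, so $A = \alpha P + v_A$ and $B = \beta P + v_B$ with $\alpha,\beta\geq 0$. Substituting back into $P = A+B$ pins down $\alpha+\beta = 1$ and $v_A+v_B = 0$, so the decomposition is trivial and $P$ is indecomposable.

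For the final assertion about rays, I would observe that each ray of $Def_{ess}(P)$ has a representative $Q$ whose smallest enclosing face of $Def(P)$ coincides with $Def(Q)$: polytopes in the relative interior of that face share a common normal fan (namely $\mathcal{N}(Q)$), and every deformation of $Q$ automatically satisfies the coarsening condition with respect to $\mathcal{N}(P)$ by transitivity of refinement. Applied to such a $Q$, the main equivalence identifies rays with Minkowski indecomposable deformations. The most delicate bookkeeping will be the careful passage between $Def(P)$ and its lineality quotient, ensuring that translates of $P$ and of candidate summands are consistently treated as trivial factors; once this convention is fixed at the outset, the three steps above assemble into a full proof.
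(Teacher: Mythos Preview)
The paper does not supply its own proof of this proposition; it is quoted from \cite{PPP22} and immediately followed only by the definition of Minkowski indecomposability. There is therefore no in-paper argument to compare against.

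Your argument is correct and is the standard one. The two implications of the equivalence follow cleanly from characterization~(4), exactly as you outline. For the final sentence about rays, the crucial ingredient is the (well-known) fact that for any $Q\in Def(P)$ the subset $Def(Q)\subseteq Def(P)$ is itself a \emph{face} of $Def(P)$, namely the smallest face containing $Q$; you invoke this but stop short of proving it. It follows, for instance, from the wall-crossing description (Proposition~\ref{prop:def-cone}): passing from $\mathcal{N}(P)$ to the coarser fan $\mathcal{N}(Q)$ turns certain wall-crossing inequalities into equalities, and the face they cut out is precisely $Def(Q)$. Once this is granted, a ray generator $Q$ has $Def_{ess}(Q)$ equal to that ray, hence one-dimensional, and your equivalence finishes the job. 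The lineality bookkeeping you flag is harmless because $\mathcal{L}(Q)=\mathcal{L}(P)$ in the $h$-parametrization (both are the $d$-dimensional subspace of translations), so quotienting is compatible with the inclusion $Def(Q)\subseteq Def(P)$.
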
 
Recall that a polytope $P$ is \emph{Minkowski indecomposable} \cite{mcmullen-1987,meyer-1974,PPP22,yost-2016,shepard-1963} if for each representation $\lambda P = Q + R, \, (\lambda > 0)$, both $Q$ and $R$ are homothetic translates of $P$.

\subsection{Deformed permutahedra and the rays of the submoduar cone}

The permutahedron $P_{n+1}$, defined as the convex hull of all permutations of the vector $(1, 2, \dots, n, n+1) \in \mathbb{R}^{n+1}$, has been 
one of the most studied polytopes in geometric and algebraic combinatorics. 

Deformed permutahedra were originally introduced by  Edmonds in 1970, under the name of \emph{polymatroids}, albeit his motivation was quite different.  In his influential paper \cite{edmonds1970} he introduced polymatroids as a polyhedral generalization of matroids, with far reaching consequences in combinatorial optimization.

They were rediscovered in 2009 under the name of generalized permutahedra by Postnikov \cite{postnikov-2009}. Since than this family of polytopes appeared naturally and found applications in several areas of mathematics, such as algebraic combinatorics \cite{aguiar2017,ardila2010,postnikov-2008}, optimization \cite{fujishige2005}, game theory \cite{danilov}, statistics \cite{morton2009},  mathematical economics \cite{joswig2021}, etc. 

\medskip
It is known that the set of deformed permutahedra (the deformation cone of $P_{n+1}$) can be parametrized by the cone of \emph{submodular functions} \cite{edmonds1970,postnikov-2009}.
However, faces of the submodular cone are far from being well understood, in particular determining the rays of $Def_{ess}(P_{n+1})$ remains an open problem since the 1970s \cite{edmonds1970}.

\medskip
One of our main results in this paper (Theorem \ref{thm:main-1}) identifies the \emph{median hypersimplex}, as the generator of a $S_{n+1}$-invariant ray in $Def_{ess}(P_{n+1})$.

\subsection{Minkowski decomposition of a permutahedron into hypersimplices}

\begin{defin}\label{def:hypersimplex}
  A hypersimplex $\Delta_{n,r}$ with parameters $n$ and $r$ is  defined as the convex hull of all $n$-dimensional vectors, vertices of the $n$-dimensional cube $[0,1]^n$, which belong to the hyperplane $x_1+\dots+ x_n = r$. Alternatively $\Delta_{n,r} = {\rm Newton}(e_r)$ can be described as the Newton polytope of the elementary symmetric function $e_r$ of degree $r$ in $n$ variables.
\end{defin}

Let $P_{n+1}(x) = P_{n+1}(x_1,x_2,\dots, x_{n+1}) \subset \mathbb{R}^{n+1}/ \mathbb{R}\cdot (1,1,\dots, 1)$ be the  permutahedron
\[
 P_{n+1}(x) = {\rm Conv}\{(x_{\pi_1}, x_{\pi_2},\dots, x_{\pi_{n+1}}) \mid \pi\in S_{n+1}\} \, .
\]

The following proposition is formulated without proof in \cite[Section 16]{postnikov-2009}.

\begin{prop} If $x_1 \geqslant x_2 \geqslant \dots \geqslant x_{n+1}$ then
there is a Minkowski sum decomposition
\begin{equation}\label{eqn:Mink}
P_{n+1}(x) =  (x_1-x_2)\Delta_{n+1,1} + (x_2-x_3)\Delta_{n+1,2}+\dots+ (x_n-x_{n+1})\Delta_{n+1,n} \, .
\end{equation}
\end{prop}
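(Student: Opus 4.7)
\medskip
\noindent\textbf{Proof proposal.} My plan is to establish the identity by comparing support functions, since Minkowski addition of polytopes is equivalent to pointwise addition of their support functions, and two polytopes coincide if and only if their support functions agree. Because $P_{n+1}(x)$ lies in the quotient $\mathbb{R}^{n+1}/\mathbb{R}\cdot(1,\ldots,1)$, it suffices to test the support functions against direction vectors $y\in\mathbb{R}^{n+1}$ with $y_1+\dots+y_{n+1}=0$. The non-negativity of the coefficients $x_r-x_{r+1}$ that is required for the right-hand side to make sense as a Minkowski sum is precisely the hypothesis $x_1\geqslant x_2\geqslant\dots\geqslant x_{n+1}$.

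For a direction $y\in\mathbb{R}^{n+1}$ with order statistics $y_{[1]}\geqslant y_{[2]}\geqslant\dots\geqslant y_{[n+1]}$, I will first compute, via the rearrangement inequality,
\[
h_{P_{n+1}(x)}(y)\,=\,\max_{\pi\in S_{n+1}}\sum_{i=1}^{n+1}x_{\pi_i}\,y_i \,=\,\sum_{i=1}^{n+1}x_i\,y_{[i]},
\]
where the maximum is achieved by matching the largest $x_i$ with the largest $y_i$. Similarly, from the vertex description of the hypersimplex,
\[
h_{\Delta_{n+1,r}}(y)\,=\,\max\{\langle y,v\rangle\mid v\in\{0,1\}^{n+1},\,|v|=r\}\,=\,y_{[1]}+y_{[2]}+\dots+y_{[r]}.
\]
These are the two ingredients entering the identity to be verified.

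The final step is Abel summation. Setting $S_r:=y_{[1]}+\dots+y_{[r]}$ and exchanging the order of summation, I obtain
\[
\sum_{r=1}^{n}(x_r-x_{r+1})\,S_r \,=\, \sum_{i=1}^{n}y_{[i]}\sum_{r=i}^{n}(x_r-x_{r+1}) \,=\, \sum_{i=1}^{n}(x_i-x_{n+1})\,y_{[i]},
\]
which differs from $h_{P_{n+1}(x)}(y)=\sum_{i=1}^{n+1}x_iy_{[i]}$ exactly by the term $x_{n+1}(y_{[1]}+\dots+y_{[n+1]})=x_{n+1}(y_1+\dots+y_{n+1})$. Since this vanishes on the dual space of the quotient $\mathbb{R}^{n+1}/\mathbb{R}\cdot(1,\ldots,1)$, the two support functions agree and the Minkowski identity (\ref{eqn:Mink}) follows.

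I do not foresee a genuinely hard step: the rearrangement inequality and Abel summation are both elementary. The only subtlety worth flagging carefully is that the equality holds in the ambient $\mathbb{R}^{n+1}$ only up to a translation by a multiple of $(1,\ldots,1)$, which is harmless in the quotient where the permutahedron is defined; without the monotonicity hypothesis on $x$, some coefficients would turn negative and the right-hand side would cease to represent an honest Minkowski sum.
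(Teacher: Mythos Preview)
Your proof is correct and follows essentially the same approach as the paper: both arguments pick a generic direction $y$, use the rearrangement inequality to identify the extremal contribution on each side, and match the two via an Abel/telescoping identity. The only cosmetic difference is that the paper works at the level of extremal vertices (writing $x^\pi=\sum_j (x_j-x_{j+1})E_j^\sigma$ with $E_j^\sigma=e_{\sigma(1)}+\dots+e_{\sigma(j)}$) whereas you take inner products and compare support function values directly.
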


\medskip\noindent
{\bf Proof:}  Assume without loss of generality that $x_1 > x_2 > \dots > x_{n+1} > 0$. (The general case is obtained by passing to the limit.) Recall the identity
\[
x = x_1e_1+x_2e_2+ \dots+ x_{n+1}e_{n+1} = (x_1-x_2)E_1 + (x_2-x_3)E_2 +\dots+ (x_n-x_{n+1})E_n
\]
where $E_i:= e_1+e_2+\dots+ e_i $ and $E_0 = E_{n+1} = 0$. Similarly for each permutation $\pi\in S_{n+1}$ 
\begin{equation}\label{eqn:pi-identity}
x ^\pi:= \sum_{i=1}^{n+1} x_{\pi(i)}e_i = \sum_{j=1}^{n+1} x_{j}e_{\sigma(j)} = \sum_{j=1}^{n+1} (x_j-x_{j+1}) E_j^\sigma
\end{equation}
where $\sigma := \pi^{-1}$ and $ E_j^\sigma:= e_{\sigma(1)}+\dots+ e_{\sigma(j)}$.

\medskip
If $y\in  \mathbb{R}^{n+1}/ \mathbb{R}\cdot (1,\dots, 1)$ is a vector with coordinates   $y_{\pi(1)}> y_{\pi(2)} > \dots > y_{\pi(n+1)}$ then the extremal vertex of $P_{n+1}$ in the direction of $y$ is $x^\pi$ while
the right-hand side of (\ref{eqn:pi-identity}) is the extremal vertex of the right-hand side of (\ref{eqn:Mink}).  \qed

\medskip
The following special case of (\ref{eqn:Mink}) implies that all hypersimplices are deformed permutahedra, i.e.\ they belong to the deformation cone of the regular permutahedron $P_{n+1}$,

\begin{equation}\label{eqn:Mink-spec}
P_{n+1} = P_{n+1}(1,2,\dots, n+1) = \Delta_{n+1,1} + \Delta_{n+1,2}+\dots+ \Delta_{n+1,n} \, .
\end{equation}

\section{Median hypersimplex is indecomposable}

 We say that a fan $\mathcal{F}$ is supported by a set $S$ of vectors if every cone in $\mathcal{F}$ is spanned by a subset of $S$.
In other words $S$ has a representative in each ray of $\mathcal{F}$.
\begin{prop} {\rm (\cite[Proposition 2.1.]{PPP22})} \label{prop:def-cone}
Let $P\subset \mathbb{R}^d$ be a convex polytope whose normal fan $\mathcal{F}$ is refined by a simplicial fan $\mathcal{G}$ supported by $S$. Then the deformation cone $Def(P)$ of $P$ is the set of polytopes
\[
       P_h :=   \{x\in \mathbb{R}^d \mid \langle s, x \rangle \leqslant h_s \, \mbox{ for all } s\in S\}
\]
where the \emph{deforming vector} $h = (h_s)_{s\in S}$ satisfies the following conditions:

\begin{enumerate}
  \item[{\rm (i)}]  $\sum_{s\in R\cup R'}  \alpha_{R,R'}(s) h_s = 0$\, for each pair $Cone(R)$ and $Cone(R')$ of adjacent maximal cones in $\mathcal{G}$, belonging to \emph{the same} maximal cone in $\mathcal{F}$,
  \item[{\rm (ii)}] the inequalities $\sum_{s\in R\cup R'}  \alpha_{R,R'}(s) h_s \geqslant 0$\, for each pair $Cone(R)$ and $Cone(R')$ of adjacent maximal cones in $\mathcal{G}$, belonging to \emph{distinct} maximal cones in $\mathcal{F}$,
\end{enumerate}
where $\sum_{s\in R\cup R'}  \alpha_{R,R'}(s) s = 0$ is the unique linear dependence with $ \alpha_{R,R'}(r)+ \alpha_{R,R'}(r') =2 $ such that $R\setminus \{r\} = R'\setminus \{r'\}$.
\end{prop}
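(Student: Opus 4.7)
The plan is to reduce the statement to the standard description of the deformation cone via piecewise linear support functions, namely characterization (3) in the list preceding the proposition: $Q \in Def(P)$ if and only if $Q$'s support function is a convex, continuous, piecewise linear function supported on $\mathcal{N}(P) = \mathcal{F}$. Every such function, after pulling back along the refinement $\mathcal{G} \to \mathcal{F}$, is also a piecewise linear function on the simplicial fan $\mathcal{G}$, and on a simplicial fan a piecewise linear function is uniquely determined by its values on the rays. Since $S$ is a set of ray representatives of $\mathcal{G}$, the vector $h = (h_s)_{s\in S}$ encodes precisely the data of such a function, and the polytope $P_h$ is the polytope whose support function takes the value $h_s$ on the ray spanned by $s$.

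\medskip
\noindent With this dictionary set up, the proof splits into two claims, corresponding to (i) and (ii).

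\medskip
First, I would show that the piecewise linear function on $\mathcal{G}$ determined by $h$ descends to a piecewise linear function on the coarser fan $\mathcal{F}$ if and only if it is linear across every wall of $\mathcal{G}$ that sits in the interior of a maximal cone of $\mathcal{F}$. For an adjacent pair $\mathrm{Cone}(R)$, $\mathrm{Cone}(R')$ with $R\setminus\{r\}=R'\setminus\{r'\}$, linearity across the shared wall amounts to saying that if we extend the linear functional that agrees with $h$ on $R$ to the opposite ray $r'$, the result equals $h_{r'}$. Using the unique linear dependence $\sum_{s\in R\cup R'} \alpha_{R,R'}(s)\,s = 0$, this equality is precisely $\sum_{s\in R\cup R'} \alpha_{R,R'}(s) h_s = 0$, which is condition (i). The normalization $\alpha_{R,R'}(r)+\alpha_{R,R'}(r')=2$ only fixes the overall scale of the relation and so does not affect the zero locus.

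\medskip
Second, once (i) holds, the resulting piecewise linear function on $\mathcal{F}$ is convex if and only if for every adjacent pair $\mathrm{Cone}(R),\mathrm{Cone}(R')$ of maximal cones of $\mathcal{G}$ lying in \emph{distinct} maximal cones of $\mathcal{F}$, the linear extension of the function from $\mathrm{Cone}(R)$ to the opposite ray $r'$ evaluates to a quantity at most $h_{r'}$. The sign convention in the dependence $\sum \alpha_{R,R'}(s)s = 0$ is arranged (via the normalization $\alpha(r)+\alpha(r')=2$) so that $\alpha_{R,R'}(r),\alpha_{R,R'}(r')>0$, since $r$ and $r'$ lie on opposite sides of the common wall. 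Rearranging the convexity inequality then yields $\sum_{s\in R\cup R'}\alpha_{R,R'}(s)h_s \ge 0$, which is (ii). Conversely, the wall-by-wall criterion for convexity of a piecewise linear function on a simplicial fan is well known to be both necessary and sufficient, so the inequalities at genuine walls of $\mathcal{F}$ suffice to guarantee global convexity.

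\medskip
The routine but delicate part, and the place I would be most careful, is the bookkeeping in step two: verifying that the particular normalization $\alpha_{R,R'}(r)+\alpha_{R,R'}(r')=2$ produces coefficients with the correct signs, and that expressing $r'$ as a linear combination of $R$ and translating the convexity inequality into the form $\sum \alpha_{R,R'}(s)h_s\ge 0$ introduces no sign errors. Once this is in place, combining the two claims characterizes exactly those $h$ for which the associated piecewise linear function on $\mathcal{G}$ is convex and descends to $\mathcal{F}$, i.e.\ is the support function of a polytope in $Def(P)$, and the identification of this polytope with $P_h$ completes the proof.
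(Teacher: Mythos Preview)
The paper does not give its own proof of this proposition: it is quoted verbatim from \cite{PPP22} and used as a black box, so there is no in-paper argument to compare against. Your proof plan is the standard one underlying the cited result---identify $Def(P)$ with convex piecewise linear functions on $\mathcal{F}$ via characterization~(3), pull back to the simplicial refinement $\mathcal{G}$ where such functions are parameterized by ray values $(h_s)_{s\in S}$, and then read off (i) as the linearity condition across internal walls and (ii) as the convexity condition across genuine walls of $\mathcal{F}$---and it is correct as outlined, including your remarks on the sign of $\alpha_{R,R'}(r),\alpha_{R,R'}(r')$ forced by the normalization.
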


\begin{remark}\label{rem:wall}
  The relations (i) (respectively (ii)) in Proposition \ref{prop:def-cone} are frequently referred to as the \emph{wall crossing equalities} (respectively the \emph{wall crossing inequalities}).
\end{remark}

\subsection{Normal fan of the median hypersimplex}


Following Conway and Sloane \cite{CS91}, the \emph{diplo-simplex}\footnote{Unaware of \cite{CS91}, in \cite{jevtic_bier_2022} we called diplo-simplex the Van Kampen-Flores polytope.} is the convex polytope $\Omega_n = {\rm Conv}(\Delta\cup\nabla)$, where $\Delta = \Delta_u = \rm{Conv}(\{u_i\})_{i=1}^n$ is a regular simplex centered at the origin, and  $\Delta^\circ = -\Delta =: \nabla$ is the polar dual of $\Delta$.

Recall that  the vertices of $\Delta$ form a \emph{circuit} in $\mathbb{R}^{n-1}$ in the sense that
the linear map
\begin{equation}\label{eqn:circuit}
\mathbb{R}^n \stackrel{\Lambda}{\longrightarrow} \mathbb{R}^{n-1}, \,  \lambda = (\lambda_1, \dots, \lambda_n) \mapsto \Lambda(\lambda) := \lambda_1u_1+\dots+\lambda_nu_n
\end{equation}
is an epimorphism with the kernel generated by $\mathbbm{1} = (1,1,\dots, 1)\in \mathbb{R}^n$, meaning that
\begin{equation}\label{eq:rel}
 u_1+u_2+\dots + u_n = 0
\end{equation}
is, up to scaling, the only linear dependence of vertices of $\Delta_u$.

 \begin{prop}{\rm (\cite[Proposition 9]{jevtic_bier_2022})}\label{prop:facial}
 The vertex set of the polytope $\Omega_n$ is the set
 \begin{equation}\label{eq:vertices}
  Vert(\Omega_n) = \{u_1,u_2,\dots, u_n, -u_1, -u_2, \dots, -u_n\}\, .
 \end{equation}
 If a subset $\{u_i\}_{i \in I} \cup \{-u_j\}_{j\in J} \subset Vert(\Omega_n)$ corresponds to a proper face of $\Omega_n$ then
\begin{align}\label{eqn:facial-Omega}
    I\cap J = \emptyset \quad \mbox{and} \quad |I|,|J|\leq\frac{n}{2}.
\end{align}
Conversely, this condition is also sufficient if $n$ is an odd number. If $n$ is even, then a pair $(I,J)$ corresponds to a proper face of $\Omega_n$ if in addition to
\rm{(\ref{eqn:facial-Omega})} either (a) $\vert I\vert = \vert J\vert = \frac{n}{2}$, or (b) both $\vert I\vert$ and $\vert J\vert$ are strictly less than $\frac{n}{2}$.
 \end{prop}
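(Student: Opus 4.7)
The plan is to analyze each proper face of $\Omega_n$ through its supporting linear functional and translate the face condition into a combinatorial inequality on the values $a_i := \langle v, u_i\rangle$. The single circuit relation $u_1+\cdots+u_n=0$ is the engine of the argument: it forces $\sum_i a_i=0$, and this one identity supplies every sharp bound that appears in (\ref{eqn:facial-Omega}).

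For the vertex claim (\ref{eq:vertices}), by the $(-\mathrm{id})$- and $S_n$-symmetry of $\Omega_n$ it is enough to show that $u_1$ is extreme. Expanding a hypothetical convex combination $u_1=\sum\alpha_i u_i+\sum\beta_i(-u_i)$ and using the fact that $\sum u_j=0$ is the only linear dependence among the $u_i$, one sees that all candidate coefficient patterns are parametrised by a single real $t$ via $\alpha_1-\beta_1=1+t$ and $\alpha_i-\beta_i=t$ for $i\neq 1$. The total mass satisfies $\sum(\alpha_i+\beta_i)\geq |1+t|+(n-1)|t|$, and for $n\geq 3$ this quantity equals $1$ only at $t=0$, which recovers the trivial combination $u_1=u_1$ and rules out any non-trivial representation.

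For the forward direction of the face characterisation, let $v$ be a non-zero supporting vector and set $a_i:=\langle v,u_i\rangle$ and $m:=\max_k|a_k|>0$. The face exposed by $v$ has vertex set $\{u_i : a_i=m\}\cup\{-u_j : a_j=-m\}$, so the associated pair is $I=\{i : a_i=m\}$, $J=\{j : a_j=-m\}$, and $I\cap J=\emptyset$ is automatic. Writing $R:=\{1,\dots,n\}\setminus(I\cup J)$, the identity $\sum a_i=0$ rewrites as $(|I|-|J|)m=-\sum_{k\in R}a_k$, and the strict bound $|a_k|<m$ on $R$ yields $\bigl||I|-|J|\bigr|\leq n-|I|-|J|$; hence $\max(|I|,|J|)\leq n/2$, with strict inequality whenever $R\neq\emptyset$. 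The case $R=\emptyset$ forces $|I|=|J|=n/2$, which is impossible when $n$ is odd.

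For the converse I would run the same computation in reverse: given an admissible pair $(I,J)$ I would write down a vector $a=(a_1,\dots,a_n)$ with $\sum a_i=0$ and invoke the circuit map (\ref{eqn:circuit}) to produce the unique $v\in\mathbb{R}^{n-1}$ that exposes the desired face. When $|I|,|J|<n/2$ set $a_i=1$ on $I$, $a_j=-1$ on $J$ and the constant $a_k=(|J|-|I|)/|R|$ on $R$; the strict inequality $||I|-|J||<|R|$ is precisely what guarantees $|a_k|<1$. When $n$ is even and $|I|=|J|=n/2$ simply take $a_i=\pm 1$ on $I\cup J$, with $R$ empty. The one step that requires real attention is excluding, for $n$ even, pairs with $\max(|I|,|J|)=n/2$ but $R\neq\emptyset$: the boundary case of the bound above would force $a_k=\pm m$ throughout $R$, which would absorb $R$ into $I$ or $J$ and contradict the definition of those sets. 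This equality-versus-strictness accounting for even $n$ is the only genuinely subtle point; everything else is dictated by the single circuit identity $\sum u_i=0$.
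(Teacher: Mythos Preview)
The paper does not prove this proposition; it is quoted from \cite[Proposition~9]{jevtic_bier_2022} and used as a black box, so there is no in-paper argument to compare against. Your proof is correct and is the natural direct approach via supporting functionals and the single circuit relation $\sum_i u_i=0$.

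Two minor cosmetic remarks. First, in the converse you say you ``invoke the circuit map (\ref{eqn:circuit})'' to produce $v$; strictly speaking you are using its adjoint, namely the linear isomorphism $v\mapsto(\langle v,u_i\rangle)_{i=1}^n$ from $\mathbb{R}^{n-1}$ onto the hyperplane $\{a\in\mathbb{R}^n:\sum a_i=0\}$, but the meaning is clear. Second, your forward direction actually proves slightly more than the stated necessary condition (\ref{eqn:facial-Omega}): you show that whenever $R\neq\emptyset$ the inequality $\max(|I|,|J|)\leq n/2$ is strict, which for even $n$ already rules out the ``mixed'' pairs with $\max(|I|,|J|)=n/2$ but $|I|\neq|J|$. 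This makes your final paragraph (the exclusion argument) a restatement of what you already established, rather than an additional step, but it does no harm to spell it out.
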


The following theorem identifies the polar dual $\Omega_{2k}^\circ = \Delta\cap \nabla$ of the diplo-simplex $\Omega_{2k}$ as the \emph{median hypersimplex} $\Delta_{2k,k}$.

\begin{theo}{\rm (\cite[Theorem 14]{jevtic_bier_2022})}\label{thm:R_n=hypersimplex}
  If $n=2k$ is even then $\Omega_{2k}^\circ = \Delta\cap \nabla$ is affine isomorphic to the hypersimplex $\Delta_{2k,k}$.  If $n=2k+1$ then $\Omega_n^\circ$ is affine isomorphic to the convex hull
  \begin{equation}\label{eqn:hull}
  \begin{aligned}
    \Omega_{2k+1}^\circ \cong {\rm Conv}\lbrace\lambda \in [0,1]^{2k+1}  \mid  (\forall i)\, \lambda_i\in\{0,1/2,1\} \mbox{ {and} } \vert Z(\lambda) \vert = \vert W(\lambda) \vert = k
     \rbrace
\end{aligned}
  \end{equation}
 where   $Z(\lambda) = \{j \mid \lambda_j = 0\}$ and $W(\lambda)= \{j \mid \lambda_j = 1\}$.
\end{theo}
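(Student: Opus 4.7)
The plan is to identify $\Omega_n^\circ$ with the intersection $\Delta \cap \nabla$ via polar duality, then parametrise this intersection using the circuit map $\Lambda$ of (\ref{eqn:circuit}), and finally recognise the rescaled image as the hypersimplex (or its half-integer analogue in the odd case). The first step uses only the standard rule $({\rm Conv}(A\cup B))^\circ = A^\circ \cap B^\circ$ together with the postulated identities $\Delta^\circ = \nabla$ and $\nabla^\circ = \Delta$, which immediately give
\[
\Omega_n^\circ \;=\; {\rm Conv}(\Delta\cup\nabla)^\circ \;=\; \Delta^\circ \cap \nabla^\circ \;=\; \Delta \cap \nabla.
\]

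Next I would parametrise both factors via $\Lambda$. One has $\Delta = \Lambda(S_n)$ where $S_n = \{\lambda \in \mathbb{R}^n_{\geq 0} : \sum_i \lambda_i = 1\}$ is the standard simplex; for $\nabla = -\Delta$, the substitution $\nu = \mathbbm{1} - \mu$ combined with $\Lambda(\mathbbm{1}) = 0$ yields $\nabla = \Lambda(\{\nu \in [0,1]^n : \sum_i \nu_i = n-1\})$. A point $y \in \Delta \cap \nabla$ therefore admits preimages $\lambda \in S_n$ and $\nu$ in the cube slice which, by (\ref{eq:rel}), differ by an element of $\ker \Lambda = \mathbb{R}\cdot\mathbbm{1}$; comparing sums pins the shift to $\nu = \lambda + \tfrac{n-2}{n}\mathbbm{1}$, and the constraint $\nu \in [0,1]^n$ then collapses to $0 \leq \lambda_i \leq 2/n$. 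Hence
\[
\Delta \cap \nabla \;=\; \Lambda\bigl(\{\lambda \in [0, 2/n]^n : \sum\nolimits_i \lambda_i = 1\}\bigr).
\]

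Finally, the rescaling $\widetilde\lambda := (n/2)\lambda$ converts this into $\Delta \cap \nabla = (2/n)\Lambda(\widetilde P_n)$ with $\widetilde P_n = \{\widetilde\lambda \in [0,1]^n : \sum_i \widetilde\lambda_i = n/2\}$; since the affine hyperplane carrying $\widetilde P_n$ is transverse to $\ker\Lambda$, the restriction $\Lambda|_{\widetilde P_n}$ is an affine isomorphism onto its image, so it suffices to recognise $\widetilde P_n$ itself. When $n = 2k$, $\widetilde P_n = \Delta_{2k,k}$ directly from Definition \ref{def:hypersimplex}. When $n = 2k+1$, a vertex of the cube slice $\widetilde P_n$ can have at most one coordinate strictly inside $(0,1)$; the half-integer sum $k+1/2$ then forces exactly one such coordinate and pins its value to $1/2$, while the remaining coordinates split into $k$ zeros and $k$ ones, reproducing (\ref{eqn:hull}). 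The only step requiring real care is fixing the shift $t = (n-2)/n$ that turns the cube inequalities on $\nu$ into the box $[0, 2/n]^n$ on $\lambda$; once that is in place the rescaling and both case identifications are essentially bookkeeping.
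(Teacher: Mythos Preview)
The paper does not give its own proof of this theorem; it is quoted from \cite[Theorem~14]{jevtic_bier_2022} without argument, so there is nothing here to compare your attempt against.

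That said, your argument is correct. The duality step $\Omega_n^\circ = ({\rm Conv}(\Delta\cup\nabla))^\circ = \Delta^\circ\cap\nabla^\circ = \nabla\cap\Delta$ is standard and uses exactly the standing convention $\Delta^\circ = \nabla$ recorded just before Proposition~\ref{prop:facial}. Your parametrisation of $\Delta$ and $\nabla$ through $\Lambda$ is right, and the kernel computation pinning the shift to $t=(n-2)/n$ is the one genuine calculation; once it is done, the box constraint $\lambda_i\in[0,2/n]$ and the rescaling to the cube slice $\{\widetilde\lambda\in[0,1]^n:\sum_i\widetilde\lambda_i=n/2\}$ follow immediately. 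The transversality of $\ker\Lambda=\mathbb{R}\cdot\mathbbm{1}$ to that affine hyperplane is what makes $\Lambda$ restrict to an affine isomorphism, so the identification of $\widetilde P_n$ with $\Delta_{2k,k}$ in the even case (via Definition~\ref{def:hypersimplex} and the standard fact that the hypersimplex equals the full cube slice) and the half-integer vertex description in the odd case are both sound. Your route is the natural one in the circuit-map language the paper adopts in (\ref{eqn:circuit})--(\ref{eq:rel}).
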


\begin{cor}\label{cor:NR}
If a polytope $P$ has the origin in its interior, then the normal fan $\mathcal{N}(P)$ corresponds to the \emph{radial fan} $\mathcal{R}(P^\circ)$  of the polar polytope $P^\circ$, constructed by taking the cone over each face of $P^\circ$. From here and Theorem \ref{thm:R_n=hypersimplex} we immediately deduce that
\[
         \mathcal{N}(\Delta_{2k,k}) = \mathcal{R}(\Omega_{2k}) \, .
\]
\end{cor}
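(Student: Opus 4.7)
The plan is to verify first the general fact that $\mathcal{N}(P) = \mathcal{R}(P^\circ)$ for any polytope with $0 \in \mathrm{int}(P)$, and then to specialise via Theorem \ref{thm:R_n=hypersimplex}.

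For the general fact, I would use the inclusion-reversing bijection between faces of $P$ and faces of $P^\circ$ given by
\[
F \longmapsto F^\diamond := \{y \in P^\circ \mid \langle x, y\rangle = 1 \text{ for all } x \in F\}.
\]
The key observation is that the outer normal cone of a face $F$ in $P$ coincides with the conic hull $\mathbb{R}_{\geqslant 0}\cdot F^\diamond$: a nonzero linear functional $y\in \mathbb{R}^d$ attains its maximum on $P$ along $F$ if and only if, after the positive rescaling that brings this maximum to the value $1$, it lies in $F^\diamond$. Running this equivalence over all faces $F$ of $P$ and recalling that the radial fan $\mathcal{R}(P^\circ)$ is by definition the fan whose cones are the conic hulls of faces of $P^\circ$, we obtain $\mathcal{N}(P) = \mathcal{R}(P^\circ)$.

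For the specific identity, the plan is to first translate $\Delta_{2k,k}$ so that its centroid is at the origin; since $\Omega_{2k}$ is centrally symmetric with $\nabla = -\Delta$, it already contains $0$ in its interior. Biduality combined with Theorem \ref{thm:R_n=hypersimplex} yields $\Delta_{2k,k}^\circ \cong \Omega_{2k}$ under a linear isomorphism, and the general statement then gives $\mathcal{N}(\Delta_{2k,k}) = \mathcal{R}(\Delta_{2k,k}^\circ) = \mathcal{R}(\Omega_{2k})$. The only point that deserves attention is ensuring that the affine isomorphism supplied by Theorem \ref{thm:R_n=hypersimplex} transports the duality pairing faithfully; since $\mathcal{N}$ is invariant under translation and $\mathcal{R}$ depends only on the face lattice of $P^\circ$ together with its embedding up to a linear change of coordinates, this poses no genuine difficulty. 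I do not anticipate a serious obstacle—the corollary is essentially a bookkeeping consequence of the face-cone dictionary, and its value lies in making the normal fan of $\Delta_{2k,k}$ accessible through the combinatorics of $\Omega_{2k}$ described in Proposition \ref{prop:facial}.
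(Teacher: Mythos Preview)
Your proposal is correct and follows the standard polar-duality argument that the paper implicitly invokes; the paper itself offers no proof beyond the phrase ``we immediately deduce,'' so you have simply made explicit the face--cone bijection $F\mapsto F^\diamond$ and the biduality $(\Omega_{2k}^\circ)^\circ=\Omega_{2k}$ that underlie the statement. Your caveat about transporting the affine isomorphism to the duality pairing is appropriate and handled adequately.
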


\subsection{First main result}\label{sec:main-1}

Recall that a polytope $P$ is \emph{indecomposable} (more precisely Minkowski indecomposable) if for each representation $\lambda P = Q + R, \, (\lambda > 0)$, both $Q$ and $R$ are homothetic translates of $P$.

\begin{theo}\label{thm:main-1}
  Median hypersimplex $\Delta_{2k,k}$ is indecomposable.
\end{theo}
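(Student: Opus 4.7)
The plan is to reduce the statement, via Proposition \ref{prop:rays}, to the assertion that the essential deformation cone $Def_{ess}(\Delta_{2k,k})$ is one-dimensional, and to compute this cone by means of Proposition \ref{prop:def-cone}. By Corollary \ref{cor:NR} together with Proposition \ref{prop:facial}, the maximal cones of $\mathcal{N}(\Delta_{2k,k}) = \mathcal{R}(\Omega_{2k})$ are the cones $C_{(I,J)} := {\rm cone}\bigl(\{u_i\}_{i\in I}\cup \{-u_j\}_{j\in J}\bigr)$ indexed by ordered partitions $[2k] = I \sqcup J$ with $|I|=|J|=k$. Each $C_{(I,J)}$ is spanned by $2k$ rays in $\mathbb{R}^{2k-1}$, hence fails to be simplicial because of the single circuit $\sum_{i\in I} u_i = \sum_{j\in J}(-u_j)$ inherited from (\ref{eq:rel}).

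Next I would invoke the Bier sphere framework advertised in the introduction to build a simplicial refinement $\mathcal{G}$ of $\mathcal{N}(\Delta_{2k,k})$ that subdivides each $C_{(I,J)}$ through its circuit and introduces no new rays; the support set $S$ in Proposition \ref{prop:def-cone} is then $\{\pm u_i\}_{i\in [2k]}$, of cardinality $4k$. Writing $a_i := h_{u_i}$ and $b_i := h_{-u_i}$, any two adjacent top-dimensional simplices $R, R'$ inside a single $C_{(I,J)}$ satisfy $R \cup R' = \{u_i\}_{i\in I}\cup\{-u_j\}_{j\in J}$, and the unique linear dependence on $R\cup R'$ is precisely the circuit above. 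After the normalisation $\alpha(r)+\alpha(r')=2$, the wall-crossing equality (i) collapses to
\[
  E_{(I,J)}: \qquad \sum_{i\in I} a_i \;=\; \sum_{j\in J} b_j,
\]
the same equation regardless of which internal wall of the triangulation of $C_{(I,J)}$ is chosen.

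The last step is to solve the system $\{E_{(I,J)}\}$ over ordered partitions. Adding $E_{(I,J)}$ and $E_{(J,I)}$ yields $\sum_i a_i = \sum_i b_i$; substituted back into $E_{(I,J)}$ this shows that $\sum_{i\in I}(a_i+b_i)$ takes the same value for every $k$-subset $I\subset [2k]$, and a one-element swap between $k$-subsets then forces $a_i+b_i = c$ for a single scalar $c$. Plugging $b_i = c-a_i$ into any $E_{(I,J)}$ gives the further normalisation $\sum_i a_i = kc$, so the solution space has dimension $2k$. The translation lineality $v\in \mathbb{R}^{2k-1}\mapsto (\langle u_i, v\rangle)_{i}$ is injective (since the $u_i$ span $\mathbb{R}^{2k-1}$), preserves both $c$ and the constraint $\sum_i a_i = kc$, and acts freely of rank $2k-1$; the quotient $Def_{ess}(\Delta_{2k,k})$ is therefore one-dimensional, and Proposition \ref{prop:rays} delivers the claim. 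The main hurdle I anticipate is establishing that a compatible simplicial refinement $\mathcal{G}$ across all $\binom{2k}{k}$ facet cones of $\Omega_{2k}$ can genuinely be produced without introducing new rays; this is precisely the role of the Bier sphere framework alluded to in the introduction, and once it is in place the algebraic collapse sketched above is essentially automatic.
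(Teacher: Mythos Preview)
Your proposal is correct and follows essentially the same route as the paper: reduce to $\dim Def_{ess}(\Delta_{2k,k})=1$ via Proposition~\ref{prop:def-cone}, use a Bier-sphere simplicial refinement of $\mathcal{R}(\Omega_{2k})$ to obtain the wall-crossing equalities $\sum_{i\in I}a_i=\sum_{j\in J}b_j$ for every balanced partition, and then collapse the linear system modulo translations. The only cosmetic difference is the endgame---the paper first normalizes by a translation to force $x_i=y_i$ and then reads off the single remaining parameter, whereas you solve the system directly (deriving $a_i+b_i=c$ and $\sum a_i=kc$) and count dimensions; both arguments are equivalent, and you correctly flag the global existence of the Bier refinement (Corollary~\ref{cor:VK-F}, Lemma~\ref{lema:wall-eq}) as the one non-formal ingredient.
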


\begin{proof}
  It is well-known that a polytope $P$ is indecomposable if and only if the \emph{essential deformation cone} $Def_{ess}(P) = Def(P)/\mathcal{L}(P)$ is one-dimensional, where $\mathcal{L}(P)$ is the linear subspace of all apices of $Def(P)$ (corresponding to translations of $P$).

  \medskip
 We apply Proposition \ref{prop:def-cone} to the radial fan  $\mathcal{F}:=\mathcal{R}(\Omega_{2k})$ which, by Corollary \ref{cor:NR}, coincides with the normal   fan
  $\mathcal{N}(\Delta_{2k,k})$ of the median hypersimplex  $\Delta_{2k,k}$.

  \medskip
  In order to apply Proposition \ref{prop:def-cone}, we need to refine $\mathcal{F}$ by a simplicial fan $\mathcal{G}$. It is convenient that \emph{Bier spheres} provide a plethora of simplicial refinements of $\mathcal{F}$, see the Appendix, especially Corollary \ref{cor:VK-F}. After some preliminary steps, the corresponding wall-crossing equalities (see Remark \ref{rem:wall}) are listed in Lemma \ref{lema:wall-eq}.

\medskip
The (essentially) unique relation (\ref{eq:rel}) among the vertices of $\Delta_u$, yields the following relations among the vertices of the polytope $\Omega_n$, one for each \emph{balanced partition} $[n] = [2k] = S\uplus T$, $\vert S\vert = \vert T\vert = k$,
\[
        u_S := \sum_{i\in S} u_i =  \sum_{k\in T} u_{\bar{k}} =: u_{\overline{T}} \quad (\mbox{where } u_{\bar{k}} := -u_k)\, .
\]
In this case the corresponding relations (i) (Proposition \ref{prop:def-cone}) obtain a particularly simple form
\begin{equation}\label{eqn:ST}
       x_S := \sum_{i\in S} x_i =  \sum_{k\in T} y_k =: y_{T}
\end{equation}
 where, for convenience, we write  $x_i$ instead of $h_i$ and $y_k$ instead of $h_{\bar{k}}$.

\medskip
We want to show that the relation (\ref{eqn:ST}) is valid for each $S\subset [2k]$ of cardinality $k$ and $T=S^c$, or more precisely that it arises as one of linear relations (i) (Proposition \ref{prop:def-cone}) for an appropriate simplicial refinement  $\mathcal{G}$ of $\mathcal{F}$. Thanks to Proposition \ref{prop:facial}, Proposition \ref{prop:max-vol} and Corollary \ref{cor:VK-F}, it is sufficient to prove the following.

\begin{lema}\label{lema:wall-eq}
  Let $K$ be a simplicial complex satisfying the condition (\ref{eqn:even}) such that $S\notin K$. 
  Then $x_S = y_T$ is one of the wall-crossing equalities corresponding to the radial fan $\mathcal{R}(Bier(K))$ of the Bier sphere $Bier(K)$.
\end{lema}
For example we can choose
  \[
  K = \binom{[2k]}{\leq k-1} \, .
  \]

Indeed, $S\notin K$ implies $T\in K^\circ$. Moreover, $S\setminus \{i\}\in K$ for each $i\in S$. Let $\{i_1, i_2\}$ be two distinct elements of $S$ and
let $S_1 := S\setminus \{i_1\}$ and  $S_2 := S\setminus \{i_2\}$ be the corresponding faces of $S$. Then $(S_1,T;\{i_1\})$ and $(S_2,T;\{i_2\})$ are distinct facets
of $Bier(K)$ which share a common ridge (codimension-one face) $(S\setminus \{i_1, i_2\},T;\{i_1, i_2\})$  of $Bier(K)$. They all belong to the same facet of the diplo-simplex $\Omega_n$, corresponding to the pair $(S,T)$  (see Proposition \ref{prop:facial}) and the relation $x_S = y_T$ follows. \qed

\medskip
Side by side with $x_S = y_T$ there is a dual relation $x_T = y_S$, which together imply that
$$
x_{[n]} = x_S+x_T = y_S+y_T = y_{[n]} \, \mbox{ and }\, z_S:= x_S + y_S = x_{[n]} \, \mbox{ for each }  S\in \binom{[2k]}{k} \, .
$$
Summarizing, the linear span  ${Lin}Def(\Delta_{2k,k})$ of  $Def(\Delta_{2k,k})$, as determined by relations (i) (Proposition \ref{prop:def-cone}), is described by equations as follows,
\begin{equation}\label{eq:linear}
{Lin}Def(\Delta_{2k,k}) = \lbrace(x,y)\in \mathbb{R}^{2n} \mid (\forall S\in \binom{[2k]}{k} \, x_S + y_S = x_{[n]} = y_{[n]} \rbrace \, .
\end{equation}
Let $\mathcal{L}(\Delta_{2k,k})\subseteq {Lin}Def(\Delta_{2k,k})$ be the linear subspace of apices of $Def(\Delta_{2k,k})$. Let $P_h\in Def(\Delta_{2k,k})$ be the polytope with the deformation vector $h = (x,y)$, which means that $t\in P_h$ if and only if
\[
    (\forall i\in [n]) \quad  \langle u_i, t\rangle \leq x_i  \mbox{ and } \langle -u_i, t\rangle \leq y_i \, .
\]
The defining inequalities for the translated polytope $P_h+ v$ are
\begin{equation}\label{eq:translate}
(\forall i\in [n]) \,\,  \langle u_i, t+v\rangle \leq x_i+\langle u_i, v\rangle   \mbox{ and } \langle -u_i, t+v\rangle \leq y_i - \langle u_i, v\rangle\, .
\end{equation}
By choosing an appropriate translation vector $v$ we can satisfy additional \emph{normalizing conditions} on $x$ and $y$.
\begin{lema}\label{lema:normaliz}
The linearization of the \emph{essential deformation cone} 
\begin{equation*}
Def_{ess}(\Delta_{2k,k}) = Def(\Delta_{2k,k})/\mathcal{L}(\Delta_{2k,k})    
\end{equation*}
is the cone
$$
{Lin}Def(\Delta_{2k,k})/\mathcal{L}(\Delta_{2k,k})
$$
which is obtained if in addition to equalities (\ref{eq:linear}) we add the normalizing conditions,
\[
   (\forall i\in [n]) \,\, x_i = y_i \, .
\]
\end{lema}
Indeed, in light of the equality $x_{[n]} = y_{[n]}$ it is sufficient to find $v$ such that $x_i = y_i$ is satisfied for each $i=1,\dots, n-1$. This is achieved as follows.
Let $(v_j)_{j=1}^{n-1}$ be the basis in $\mathbb{R}^{n-1}$ dual to the basis $(u_j)_{j=1}^{n-1}$, i.e.\ the basis uniquely described by the condition $\langle u_i, v_j \rangle = \delta_{i,j}$. Than for each $j\in [n-1]$ there is a unique $\lambda_j$ such that the modification vector $\lambda_jv_j$ (applied in (\ref{eq:translate})) guarantees that the condition $x_j = y_j$ is satisfied and the unique desired translation vector is $v := \sum_{j=1}^{n-1} \lambda_jv_j$.

\medskip The lemma implies that the essential deformation cone $Def_{ess}(\Delta_{2k,k})$ is one-dimensional, since $x_{[n]} = y_{[n]}$ is the only variable parameter, which completes the proof of the theorem.
\end{proof}

\section{Polytopal Bier spheres of non-threshold complexes}\label{sec:Marinko}

In an earlier publication \cite{TZJ24}, we described an algorithm for finding an explicit polytopal realization of a given Bier sphere $Bier(K)$, by a sequence of successive modifications,
$$L=K_{0}\subset K_1\subset K_2\subset \cdots \subset K_k=K$$
and the corresponding sequence of Bier sphere approximations,
$$Bier(L)=Bier(K_0), Bier(K_1),\ldots, Bier(K_k)=Bier(K) \, ,$$
where each sphere $Bier(K_i)$ is obtained from $Bier(K_{i-1})$ by a local re-triangulation of $Bier(K_{i-1})$.\medskip

The initial step is the so called \emph{canonical polytopal realization} (see the Appendix, Section \ref{sec:app}) of the Bier sphere $Bier(L)$, where $L$  is a threshold complex $L$, chosen to be as close to $K$ as possible.\medskip

\medskip 
This algorithm was successfully applied to all Bier spheres with at most 11 vertices, as summarized by the following theorem.

\begin{theo}\label{thm:Bier-10-vertices}{\rm (\cite{TZJ24})}
All Bier spheres with up to eleven vertices are polytopal, in particular this holds for all $3$-dimensional Bier spheres. For illustration, there are $88$ non-threshold complexes on $5$ vertices and $48$ corresponding non-isomorphic Bier spheres. 
\end{theo}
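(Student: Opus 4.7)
The theorem is quoted from \cite{TZJ24} as a computer-assisted result, so my ``proof'' is essentially an implementation plan around the algorithm sketched at the start of the section. First, I would enumerate up to isomorphism all simplicial complexes $K$ for which $Bier(K)$ has at most $11$ vertices. Only non-threshold $K$ need any real work, because if $L$ is threshold then $Bier(L)$ already admits the canonical polytopal realization recalled in the Appendix; this furnishes the base case. The combinatorial enumeration alone yields the numerical part of the statement: on $5$ vertices a direct machine count of simplicial complexes, filtered by the threshold property and then grouped by Bier-isomorphism, produces exactly $88$ non-threshold complexes and $48$ distinct Bier spheres.

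For each non-threshold $K$, I would then pick a threshold complex $L$ as combinatorially close to $K$ as possible and build a chain
\[
L=K_{0}\subset K_1\subset \cdots \subset K_k=K,
\]
together with polytopes $P_0,\ldots,P_k$, where $P_0$ is the canonical realization of $Bier(L)$ and each $P_i$ is obtained from $P_{i-1}$ by a single vertex move (a small bistellar-like flip) that implements the local re-triangulation taking $Bier(K_{i-1})$ to $Bier(K_i)$. Feasibility of one step reduces to a small linear-programming problem in the coordinates of the moving vertex, constrained by the facet inequalities of $P_{i-1}$ together with the new facets appearing after the flip, so certifying an individual step is routine.

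The real obstacle is avoiding dead ends in the chain: for fixed $K$, the choice of the seed $L$ and the ordering $K_1,\ldots,K_{k-1}$ are both free, and not every choice admits a convex lift of every intermediate re-triangulation. The search must therefore back-track, trying alternative seed complexes and alternative orderings until a successful sequence of convex lifts is produced. Success has to be checked for \emph{every} member of the enumerated list; the theorem records the empirical fact that in the range $n\le 11$ the back-tracking always terminates with an explicit coordinate realization, which is output as the polytopality certificate. The observation that every $3$-dimensional Bier sphere is polytopal is then just the special case in which the ambient complex has at most $5$ vertices and hence already lies inside the verified range.
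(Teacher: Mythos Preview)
The paper does not prove this theorem at all; it quotes it from \cite{TZJ24} and only sketches, in the surrounding paragraphs, the algorithm used there (choose a nearby threshold seed $L$, take its canonical polytopal realization, and push through a chain $L=K_0\subset\cdots\subset K_k=K$ by local re-triangulations). Your proposal is exactly a fleshed-out version of that sketch --- enumeration of the relevant $K$, canonical realization at the threshold seed, stepwise convex lifts with LP certificates, and back-tracking over seeds and orderings --- so it matches the paper's (cited) approach essentially verbatim; your reading of the $3$-dimensional case as $n=5$ (hence $\le 10\le 11$ vertices) is also the intended one.
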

The reader can find in \cite{TZJ24} additional information, including the links to explicit convex realizations of all  spheres with 10 and 11 vertices.

\subsection{Hemi-icosahedral Bier sphere}\label{sec:hemi}

Due to the complexity of calculation, Bier spheres with twelve vertices or more were not initially tractable by the algorithm described above.
With some improvements of the algorithm and by using more advanced computing resources we were able to tackle individual Bier spheres with 12 vertices.

A natural choice was the Bier sphere $Bier(\mathbb{I}_6)$ of the \emph{hemi-icosahedron} $\mathbb{I}_6$, the minimal, 6-vertex triangulation of the real projective plane $\mathbb{R}P^2$.  As depicted in Figure \ref{fig:hemi}, this complex arises as the $\mathbb{Z}_2$-quotient of the icosahedral sphere. It is not difficult to check that $\mathbb{I}_6$ is not a threshold complex so, in light of Theorem \ref{thm:Bier-polytope}, $Bier(\mathbb{I}_6)$ was our first candidate for a non-polytopal Bier sphere.

Contrary to our expectations, the sphere $Bier(\mathbb{I}_6)$ turned out to be polytopal and this is the second main result of our paper (Theorem \ref{thm:iko}).

\begin{figure}[htb]
\centering\vspace{+0cm}
\includegraphics[scale=0.55]{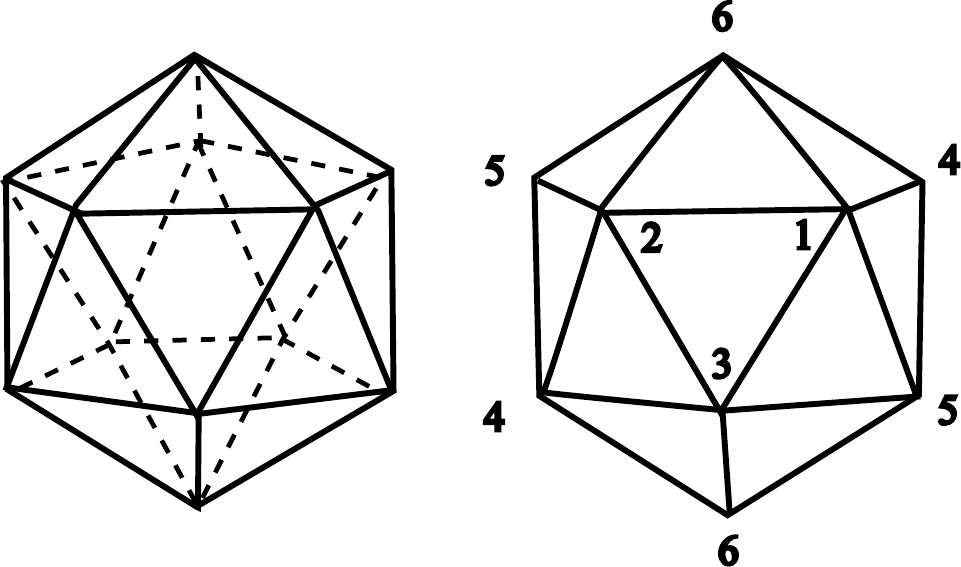}
\caption{{Hemi-icosahedron}}
\label{fig:hemi}
\end{figure}

\subsection{Polytopality of hemi-icosahedral Bier sphere}

\begin{theo}\label{thm:iko}
  The Bier sphere $Bier(\mathbb{I}_6)$ of the minimal, 6-vertex triangulation $\mathbb{I}_6$ of the real projective plane $\mathbb{R}P^2$ is polytopal, i.e.\ it can be realized as the boundary sphere of a five dimensional convex polytope. More explicitly, the vertices of the geometric realisation of $Bier(\mathbb{I}_6)$, obtained by the algorithm described in Section \ref{sec:Marinko}, are coordinatized in $\mathbb{R}^5$ as the rows of the matrix (\ref{matrix:hemi-icosahedron-vertices}).
\end{theo}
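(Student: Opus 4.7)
The plan is to apply the incremental algorithm of Section~\ref{sec:Marinko} with $K=\mathbb{I}_6$, and then to certify that the coordinates listed in the statement do define a convex polytope in $\mathbb{R}^5$ whose boundary complex is combinatorially isomorphic to $Bier(\mathbb{I}_6)$. First I would single out a threshold complex $L\subset\mathbb{I}_6$ that is as close to $\mathbb{I}_6$ as possible; by the theory recalled in the Appendix (Section~\ref{sec:app}), $Bier(L)$ then admits a canonical polytopal realization $P_0\subset\mathbb{R}^5$ whose vertices can be written down explicitly from the threshold data of $L$. This $P_0$ is the seed of the inductive construction.

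Next I would fix a chain $L=K_0\subset K_1\subset\cdots\subset K_k=\mathbb{I}_6$ in which each inclusion adds a single face of $\mathbb{I}_6$. Combinatorially, each atomic step $K_{i-1}\subset K_i$ transforms $Bier(K_{i-1})$ into $Bier(K_i)$ by a local re-triangulation along a prescribed ridge (a bistellar-type flip of the Bier sphere). The geometric task at step $i$ is to perturb the running polytope $P_{i-1}\subset\mathbb{R}^5$ so as to preserve every facet already realized in the correct combinatorial position while executing exactly the prescribed flip on the boundary. Feasibility of such a perturbation is a system of strict linear inequalities on the vertex displacements, and this is where the extensive computer search alluded to in the abstract enters: one must locate a perturbation inside a feasibility cone whose dimension shrinks rapidly as $i$ grows.

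The main obstacle is precisely this geometric feasibility for the last few flips, since the twelve vertex, five dimensional regime leaves very little slack; a priori the feasibility cone could collapse, in which case the present route would fail to decide polytopality of $Bier(\mathbb{I}_6)$. To address this I would (i) allow the order in which the faces of $\mathbb{I}_6\setminus L$ are added to be permuted whenever a particular ordering stalls, and (ii) record the perturbations symbolically so that exact coordinates for the terminal polytope $P_k$ can be extracted. The proof is then concluded by a mechanical verification: compute the face lattice of the convex hull of the claimed twelve rows and check combinatorial isomorphism with the intrinsic description of $Bier(\mathbb{I}_6)$ recalled in Section~\ref{sec:app}. Both the convexity check and the face lattice comparison are finite computations, so exhibiting and certifying the matrix from the statement suffices to establish polytopality.
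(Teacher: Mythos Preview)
Your plan is essentially the same as the paper's: obtain the twelve coordinate rows via the incremental algorithm of Section~\ref{sec:Marinko}, then certify the result by computing the face lattice of their convex hull and matching it against the combinatorial description of $Bier(\mathbb{I}_6)$. The paper's written proof is in fact only the certification step (run \emph{Polymake} on the matrix, list the $60$ facets, and observe they coincide with the facets of $Bier(\mathbb{I}_6)$ read off from the hemi-icosahedron), treating the algorithmic search as the \emph{construction} rather than part of the formal argument; you have correctly identified that exhibiting and mechanically verifying the matrix is what actually proves polytopality.

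One small discrepancy worth flagging: you propose to ``record the perturbations symbolically so that exact coordinates \ldots\ can be extracted.'' The paper does \emph{not} do this; it works in floating-point arithmetic with $20$ significant digits and reports that rounding to $7$ decimals is the tightest truncation that still yields the correct face lattice (rounding to $5$ decimals already fails). So the final verification is a rational-arithmetic check on the rounded coordinates, not on symbolic ones. This does not affect correctness of the strategy, but your symbolic variant would be a genuine (and arguably cleaner) alternative implementation rather than a faithful reconstruction of what the paper does.
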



 \begin{equation}\label{matrix:hemi-icosahedron-vertices}
\left[
\begin{array}{ccccc}
 3.4083657 & 2.5824889 & 2.7636015 & 4.2308106 & 1.8312304\\
4.7537197 & 5.7195211 & 4.3242208 & 5.0848353 & 4.7010916\\
-3.6225148 & -3.6600717 & -2.256866 & -3.0205762 & -3.2792018\\
-0.5413665 & -1.3194438 & 0.1968267 & 2.6400609 & -2.4040568\\
1.200048 & 2.0558616 & 0.1089153 & -2.0769712 & 4.0182034\\
-3.8440022 & -3.7596347 & -4.3286914 & -6.2082317 & -2.9269678\\
-2.018964 & -0.6272048 & -0.8564941 & -2.3165986 & -0.1697324\\
-4.3439331 & -4.8032114 & -3.0078281 & -3.1808993 & -4.0042503\\
-1.5662252 & -1.3174413 & -2.9671459 & -4.6607583 & -0.1247618\\
0.1168746 & -0.1838719 & -0.1546464 & -1.6760483 & 0.2340882\\
1.232693 & 0.4628524 & 1.7251772 & 4.4773757 & -1.512764\\
5.2253047 & 4.8501556 & 4.4529303 & 6.7070012 & 3.6371211
\end{array}
\right]
\end{equation}

\medskip
\begin{proof} 
Let $Q$ be the convex hull of the row vectors of the matrix (\ref{matrix:hemi-icosahedron-vertices}).
Applying \emph{Polymake} \cite{polymake} to the data above and relabeling the vertices with
\begin{equation*}
\sigma : \{0, 1, 2, 3, 4, 5, 6, 7, 8, 9, 10, 11\}\longrightarrow \{1,2,3,4,5,6, \bar{1}, \bar{2}, \bar{3}, \bar{4}, \bar{5},\bar{6}\}
\end{equation*}
we obtain the face lattice of $Q$. Its facets and edges are given in tables (\ref{table:hemi-icosahedron-facets}) and (\ref{table:hemi-icosahedron-edges}).

\begin{equation}\label{table:hemi-icosahedron-facets}
Facets(Q)=\left\lbrace
\begin{array}{ccccc}
{4 6 \bar{1} \bar{2} \bar{3}} & {3 4 6 \bar{1} \bar{2}} & {1 6 \bar{2} \bar{3} \bar{4}} & {3 5 6 \bar{1} \bar{4}} & {1 5 \bar{2} \bar{3} \bar{4}} \\
{3 5 6 \bar{1} \bar{2}} & {3 6 \bar{1} \bar{4} \bar{5}} & {5 6 \bar{2} \bar{3} \bar{4}} & {3 4 6 \bar{1} \bar{5}} & {3 5 6 \bar{2} \bar{4}} \\
{4 5 \bar{1} \bar{2} \bar{3}} & {5 6 \bar{1} \bar{2} \bar{3}} & {3 4 6 \bar{2} \bar{5}} & {1 4 6 \bar{2} \bar{3}} & {3 6 \bar{2} \bar{4} \bar{5}} \\
{4 6 \bar{1} \bar{3} \bar{5}} & {1 4 6 \bar{3} \bar{5}} & {1 6 \bar{2} \bar{4} \bar{5}} & {1 3 \bar{2} \bar{4} \bar{5}} & {1 3 5 \bar{2} \bar{4}} \\
{1 4 5 \bar{2} \bar{3}} & {1 4 6 \bar{2} \bar{5}} & {2 3 4 \bar{1} \bar{5}} & {2 5 6 \bar{3} \bar{4}} & {2 6 \bar{1} \bar{4} \bar{5}} \\
{2 3 \bar{1} \bar{4} \bar{5}} & {2 5 6 \bar{1} \bar{4}} & {1 2 3 \bar{4} \bar{5}} & {1 2 6 \bar{3} \bar{4}} & {1 2 6 \bar{4} \bar{5}} \\
{1 2 6 \bar{3} \bar{5}} & {2 4 5 \bar{1} \bar{3}} & {2 6 \bar{1} \bar{3} \bar{5}} & {2 4 \bar{1} \bar{3} \bar{5}} & {2 5 6 \bar{1} \bar{3}} \\
{2 4 \bar{3} \bar{5} \bar{6}} & {2 4 5 \bar{3} \bar{6}} & {2 4 5 \bar{1} \bar{6}} & {1 2 \bar{3} \bar{5} \bar{6}} & {1 2 \bar{3} \bar{4} \bar{6}} \\
{1 2 3 \bar{4} \bar{6}} & {1 2 3 \bar{5} \bar{6}} & {2 5 \bar{1} \bar{4} \bar{6}} & {2 3 \bar{1} \bar{4} \bar{6}} & {2 5 \bar{3} \bar{4} \bar{6}} \\
{2 3 4 \bar{1} \bar{6}} & {2 3 4 \bar{5} \bar{6}} & {1 4 \bar{2} \bar{5} \bar{6}} & {3 4 \bar{1} \bar{2} \bar{6}} & {3 5 \bar{1} \bar{4} \bar{6}} \\
{1 5 \bar{3} \bar{4} \bar{6}} & {3 5 \bar{1} \bar{2} \bar{6}} & {4 5 \bar{1} \bar{2} \bar{6}} & {3 4 \bar{2} \bar{5} \bar{6}} & {1 4 \bar{3} \bar{5} \bar{6}} \\
{1 3 \bar{2} \bar{5} \bar{6}} & {1 3 5 \bar{4} \bar{6}} & {1 3 5 \bar{2} \bar{6}} & {1 4 5 \bar{3} \bar{6}} & {1 4 5 \bar{2} \bar{6}}
\end{array}
\right\rbrace
\end{equation}

\begin{equation}\label{table:hemi-icosahedron-edges}
Edges(Q)=\left\lbrace
\begin{array}{ccccc}
{1 2} & {1 3} & {1 4} & {1 5} & {1 6} \\
{1 \bar{2}} & {1 \bar{3}} & {1 \bar{4}} & {1 \bar{5}} & {1 \bar{6}} \\
{2 3} & {2 4} & {2 5} & {2 6} & {2 \bar{1}} \\
{2 \bar{3}} & {2 \bar{4}} & {2 \bar{5}} & {2 \bar{6}} & {3 4} \\
{3 5} & {3 6} & {3 \bar{1}} & {3 \bar{2}} & {3 \bar{4}} \\
{3 \bar{5}} & {3 \bar{6}} & {4 5} & {4 6} & {4 \bar{1}} \\
{4 \bar{2}} & {4 \bar{3}} & {4 \bar{5}} & {4 \bar{6}} & {5 6} \\
{5 \bar{1}} & {5 \bar{2}} & {5 \bar{3}} & {5 \bar{4}} & {5 \bar{6}} \\
{6 \bar{1}} & {6 \bar{2}} & {6 \bar{3}} & {6 \bar{4}} & {6 \bar{5}} \\
{\bar{1} \bar{2}} & {\bar{1} \bar{3}} & {\bar{1} \bar{4}} & {\bar{1} \bar{5}} & {\bar{1} \bar{6}} \\
{\bar{2} \bar{3}} & {\bar{2} \bar{4}} & {\bar{2} \bar{5}} & {\bar{2} \bar{6}} & {\bar{3} \bar{4}} \\
{\bar{3} \bar{5}} & {\bar{3} \bar{6}} & {\bar{4} \bar{5}} & {\bar{4} \bar{6}} & {\bar{5} \bar{6}}
\end{array}
\right\rbrace
\end{equation}
We read off the facets of $Bier(\mathbb{I}_6)$ from Figure \ref{fig:hemi}. It turns out that this is precisely the list exhibited in table (\ref{table:hemi-icosahedron-facets}). \end{proof}

The reader may wonder why the entries of the matrix (\ref{matrix:hemi-icosahedron-vertices}) are chosen to 7 decimal places and what happens if these numbers are rounded to, say, 5 decimal places. 

\medskip
Surprisingly enough 5 decimal places are not enough, the corresponding table (\ref{table:hemi-icosahedron-facets}) no longer represents the face lattice of $Bier(\mathbb{I}_6)$ and the convexity is lost.

\medskip
In reality the main algorithm \cite{TZJ24}  (as outlined in Section \ref{sec:Marinko}) works in floating-point arithmetic with 20 significant digits. It turned out that rounding to 7 decimal places was the best we can get, without loosing the convexity of the sphere $Bier(\mathbb{I}_6)$.

\section{Appendix: Bier spheres}\label{sec:app}

\subsection{Glossary}

\medskip
For the reader's convenience here is a glossary with brief descriptions of the main objects associated to  \emph{Bier spheres} \cite{Bier, Matousek}. For a more complete exposition of the geometry of Bier spheres see \cite{Zivaljevic19, jevtic_bier_2022, TZJ24}.

\medskip
$Bier(K) = K\ast_\Delta K^\circ$, the Bier sphere of $K$, see \cite{Matousek}, is a combinatorial object (simplicial complex), defined as a deleted join of two simplicial complexes, $K\subset 2^{[n]}$ and its Alexander dual $$K^\circ := \{A\subset [n] \mid A^c\notin K\}\, .$$

\medskip
$\mathcal{R}_{\pm\delta}(Bier(K))\subset H_0 \cong  \mathbb{R}^{n}/ \mathbb{R}\cong \mathbb{R}^{n-1}$
is the \emph{canonical starshaped realization} of $Bier(K)$ described in \cite[Theorem 3.1]{jtz-bier-2019}.

\medskip
$Star(K)$ is the star-shaped body whose boundary is the sphere $\mathcal{R}_{\pm\delta}(Bier(K))$.

\medskip
$Fan(K) = \mathcal{R}(Star(K))$, the \emph{canonical} or the \emph{Bier fan} of $K$, is the radial fan of the star-haped body $Star(K)$.

\medskip
$\Omega_n$ is a universal, $(n-1)$-dimensional convex polytope (the Van Kampen-Flores polytope) which is equal, as a convex body, to $Star(K)$ for each Bier sphere of maximal volume.

\subsection{Bier spheres of maximal volume}

\begin{prop}\label{prop:max-vol} {\rm (\cite[Proposition 6]{jevtic_bier_2022})}
 If $n=2k+1$ is odd the unique Bier sphere of maximal volume is $Bier(K)$ where
 \begin{equation}\label{eqn:VK-F-1}
K =\binom{[n]}{\leq k} = \{ S\subset [n] \mid \vert S\vert \leq k \} \, .
\end{equation}\label{eqn:even}
 If $n=2k$ is even a Bier sphere $Bier(K)$ is of maximal volume if and only if
 \begin{equation}\label{eqn:VK-F-2}
 \binom{[n]}{\leq k-1} \subseteq K \subseteq \binom{[n]}{\leq k} \, .
 \end{equation}
 A Bier sphere $Bier(K)$ is of minimal volume if and only if either $K = \{\emptyset\}$ or $K$ is the boundary of the simplex $\Delta_{[n]}$, $K = \partial\Delta_{[n]}= 2^{[n]}\setminus \{[n]\}$.
\end{prop}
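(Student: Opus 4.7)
The plan is to compare the star-shaped body $Star(K)$ with the universal polytope $\Omega_n$. The guiding observation is that in the canonical realisation $\mathcal{R}_{\pm\delta}$ the vertices of $Bier(K)$ are always drawn from $\mathrm{Vert}(\Omega_n)=\{\pm u_1,\dots,\pm u_n\}$, so the sphere $\mathcal{R}_{\pm\delta}(Bier(K))$ is inscribed in $\Omega_n$ and $Star(K)\subseteq \Omega_n$. Consequently $\mathrm{vol}(Star(K))$ is maximal exactly when $Star(K)=\Omega_n$, i.e.\ when $Bier(K)$ geometrically triangulates $\partial\Omega_n$. For the minimum I will identify the only cases in which $Star(K)$ collapses to one of the two inscribed simplices $\Delta$ or $\nabla$.

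For the maximum, I would look at a generic facet of $Bier(K)$: a triple $(\sigma,\tau;\{i\})$ with $\sigma\in K$, $\tau=[n]\setminus(\sigma\cup\{i\})$, $\sigma\cup\{i\}\notin K$, and $|\sigma|+|\tau|=n-1$. The corresponding geometric simplex $\mathrm{Conv}(\{u_j\}_{j\in\sigma}\cup\{-u_j\}_{j\in\tau})$ is a face of $\Omega_n$ labelled by the pair $(\sigma,\tau)$, so by Proposition \ref{prop:facial} it lies in an honest facet of $\Omega_n$ only when both $|\sigma|$ and $|\tau|$ satisfy the size constraints for facets of $\Omega_n$. When $n=2k+1$ the facets of $\Omega_n$ are the simplices $(I,J)$ with $|I|=|J|=k$, so we must have $|\sigma|=|\tau|=k$ for every facet of $Bier(K)$; combined with closure under subsets this forces $K=\binom{[n]}{\le k}$, and conversely every facet of $\Omega_{2k+1}$ then arises exactly once, giving $Star(K)=\Omega_{2k+1}$. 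When $n=2k$ the facets of $\Omega_n$ are the $(I,J)$ with $|I|=|J|=k$ (each a $2k$-vertex polytope, not a simplex), and the size constraint $|\sigma|,|\tau|\in\{k-1,k\}$ forces on one hand $\binom{[n]}{\le k-1}\subseteq K$ (no $\sigma\in K$ of size $\le k-2$ can be extendable outside $K$) and on the other hand $K\subseteq\binom{[n]}{\le k}$ (no maximal face of $K$ may have size $>k$, else its missing single-element extensions produce forbidden facets).

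For sufficiency in the even case I would check that for each balanced partition $(I,J)$ the $k$ facets of $Bier(K)$ with $\sigma=I\setminus\{i\}$, $\tau=J$, $i\in I$ (used when $I\notin K$) or with $\sigma=I$, $\tau=J\setminus\{j\}$, $j\in J$ (used when $I\in K$) actually triangulate the facet $(I,J)$ of $\Omega_{2k}$. This is the standard ``anti-prism'' triangulation of the $(2k-2)$-dimensional facet of $\Omega_{2k}$, and the choice ``top or bottom'' is governed precisely by whether $I$ belongs to $K$. The resulting collection is a complete simplicial refinement of $\partial\Omega_{2k}$, so $Star(K)=\Omega_{2k}$.

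For the minimum, I would compute $Bier(\{\emptyset\})$ and $Bier(\partial\Delta_{[n]})$ directly: in the first case $K^\circ=\partial\Delta_{[n]}$, the only faces of the deleted join are $(\emptyset,\tau)$ with $\tau\subsetneq[n]$, so $Star(\{\emptyset\})=\nabla$; the second case is symmetric under $K\leftrightarrow K^\circ$ and yields $Star(\partial\Delta_{[n]})=\Delta$. Conversely, if $K\ne\{\emptyset\},\partial\Delta_{[n]}$, then $K$ contains at least one singleton $\{i\}$ and $K^\circ$ contains at least one singleton $\{j\}$, so both $u_i$ and $-u_j$ appear as vertices of $Bier(K)$; the starshaped body therefore contains proper segments from the origin in two opposite directions and strictly contains a translate of the simplex $\Delta$, giving strictly larger volume. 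The main obstacle I expect is the converse in the even maximal case: verifying rigorously that the local triangulation patterns on each non-simplicial facet of $\Omega_{2k}$ glue together into a bona fide sphere, which needs the ``top vs.\ bottom'' consistency provided by $K$ being sandwiched between $\binom{[n]}{\le k-1}$ and $\binom{[n]}{\le k}$.
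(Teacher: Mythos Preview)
The paper does not actually prove this proposition: it is quoted from \cite[Proposition~6]{jevtic_bier_2022} and stated without argument in the appendix, so there is no ``paper's own proof'' to compare against here.

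That said, your approach is essentially the one used in \cite{jevtic_bier_2022}, and your treatment of the \emph{maximal} case is correct. The inclusion $Star(K)\subseteq\Omega_n$, together with the facet description in Proposition~\ref{prop:facial}, pins down exactly which $K$ achieve equality. Your observation that each non-simplex facet $(I,J)$ of $\Omega_{2k}$ carries a single affine relation $\sum_{i\in I}u_i=\sum_{j\in J}(-u_j)$, and hence admits exactly two triangulations (the ``top'' and ``bottom'' ones you describe), handles sufficiency in the even case cleanly. The gluing worry you raise at the end is not a real obstacle: $Bier(K)$ is already known to be a PL sphere, and you have shown that every one of its facets lands in a facet of $\Omega_{2k}$ and that, on each such facet, the collection you describe is one of the two honest triangulations; this forces $|Bier(K)|=\partial\Omega_{2k}$.

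The only genuine gap is in the converse for the \emph{minimum}. The claim that, once both some $u_i$ and some $-u_j$ occur as vertices, $Star(K)$ ``strictly contains a translate of $\Delta$'' is not justified and is not obvious. A cleaner route, already implicit in your setup, is to observe that for every facet $(\sigma,\tau)$ of $Bier(K)$ the cone $\mathrm{Conv}\bigl(\{0\}\cup\{u_i\}_{i\in\sigma}\cup\{-u_j\}_{j\in\tau}\bigr)$ has the \emph{same} volume: replacing $|\tau|$ of the columns $u_\ell$ (for $\ell\neq i_0$, where $\{i_0\}=[n]\setminus(\sigma\cup\tau)$) by their negatives only changes the sign of the determinant. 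Hence $\mathrm{vol}(Star(K))$ equals a fixed constant times the number of facets of $Bier(K)$. The minimum is therefore attained exactly when $Bier(K)$ has the fewest possible facets for a triangulated $(n-2)$-sphere, namely $n$, which happens iff $Bier(K)\cong\partial\Delta^{n-1}$ and forces $K\in\{\{\emptyset\},\partial\Delta_{[n]}\}$.
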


\begin{cor}\label{cor:VK-F}{\rm (\cite[Corollary 7]{jevtic_bier_2022})}
For all Bier spheres $Bier(K)$ of maximal volume, the convex body $\Omega_n = Star(K)$ is unique and independent of $K$. The body $\Omega_n$ is centrally symmetric. More explicitly $\Omega_n = {\rm Conv}(\Delta_\delta \cup \nabla_\delta)$ where $\Delta_\delta \subset H_0$ is the simplex spanned by vertices $\delta_i := e_i - {\frac{1}{n}}(e_1+\dots+ e_n)$ and $\nabla_\delta := -\Delta_\delta = \Delta_{\bar\delta}$ is the simplex spanned by $\bar\delta_i = -\delta_i$.
\end{cor}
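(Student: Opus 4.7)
The plan is to establish $Star(K) = \Omega_n$ for every $K$ listed by Proposition \ref{prop:max-vol}, where $\Omega_n := {\rm Conv}(\Delta_\delta \cup \nabla_\delta)$. From this identification all three assertions are immediate: independence of $K$ and the explicit formula are built into the right-hand side, and central symmetry follows from $\bar\delta_i = -\delta_i$, which gives $-\Omega_n = \Omega_n$.

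One inclusion is free of any hypothesis on $K$. Since $\mathcal{R}_{\pm\delta}$ realizes the vertices of $Bier(K)$ as a subset of $\{\delta_i, \bar\delta_i\}_{i=1}^n = {\rm Vert}(\Omega_n)$, every facet simplex of the realization lies inside $\Omega_n$, and star-shapedness about the origin (which is interior to $\Omega_n$) propagates the containment to the bounded bodies, yielding $Star(K) \subseteq \Omega_n$. To prove the corollary it therefore suffices to verify the reverse containment for each $K$ on Proposition \ref{prop:max-vol}'s list, equivalently that the facet simplices of $\mathcal{R}_{\pm\delta}(Bier(K))$ cover the entire boundary of $\Omega_n$.

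For this I would read off the facets of $Bier(K) = K \ast_\Delta K^\circ$ as disjoint pairs $(A,B)$ with $A \in K$, $B \in K^\circ$, $|A|+|B| = n-1$, and locate each one inside a facet of $\Omega_n$ via Proposition \ref{prop:facial}. In the odd case $n = 2k+1$, where $K = \binom{[n]}{\leq k}$ (so $K^\circ = K$) is uniquely forced, the facets of $Bier(K)$ have $|A|=|B|=k$ and match the simplicial facets of $\Omega_n$ one-to-one. In the even case $n = 2k$, the facets of $Bier(K)$ have $\{|A|,|B|\} = \{k,k-1\}$; each lies inside the unique balanced facet $(I,J)$ of $\Omega_{2k}$ obtained by adjoining to the smaller part the single element of $[n]\setminus(A\cup B)$. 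The hypothesis $\binom{[n]}{\leq k-1}\subseteq K \subseteq \binom{[n]}{\leq k}$ ensures that for each balanced $(I,J)$ exactly one of the two natural star triangulations materialises: the $k$ simplices $(I, J\setminus\{j_0\})_{j_0\in J}$ when $I \in K$, or dually $(I\setminus\{i_0\}, J)_{i_0\in I}$ when $I \notin K$.

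The anticipated obstacle is the even case, where one must verify that these $k$ star simplices genuinely triangulate the non-simplicial balanced facet $(I,J)$, a polytope that carries $2k$ vertices yet has dimension $2k-2$ on account of the single affine relation $\sum_{i\in I}\delta_i = \sum_{j\in J}\bar\delta_j$. I would close this either by a direct volume count (the $k$ simplex volumes sum to the volume of the facet) or by induction on $k$, exploiting the join-type structure cut out by the relation. Once the boundary agreement is secured, $\partial Star(K) = \partial \Omega_n$, hence $Star(K) = \Omega_n$, and the corollary follows.
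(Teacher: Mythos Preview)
The present paper does not supply a proof of this corollary; it is quoted from \cite[Corollary~7]{jevtic_bier_2022} and used as background in the Appendix, so there is no ``paper's own proof'' to compare your attempt against.

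On its own merits your argument is sound. The containment $Star(K)\subseteq\Omega_n$ and the odd case $n=2k+1$ are complete as written, and in the even case your bookkeeping of which Bier facets land in which balanced facet $(I,J)$ of $\Omega_{2k}$ is correct. The residual gap you flag is in fact a standard fact rather than a genuine obstacle: the $2k$ vertices $\{\delta_i\}_{i\in I}\cup\{\bar\delta_j\}_{j\in J}$ of that facet form a \emph{circuit}, i.e.\ they carry (up to scaling) the unique affine dependence
\[
\sum_{i\in I}\delta_i \;-\; \sum_{j\in J}\bar\delta_j \;=\; 0 \, ,
\]
and any circuit configuration admits exactly two triangulations, obtained by deleting one vertex at a time from the positive, respectively the negative, part of the relation (see e.g.\ \cite{loera-2010}). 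Your two ``star'' collections $\{(I,\,J\setminus\{j\})\}_{j\in J}$ and $\{(I\setminus\{i\},\,J)\}_{i\in I}$ are precisely these two triangulations, so neither a volume count nor an induction on $k$ is needed; invoking the circuit picture closes the gap immediately and finishes the proof.
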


\subsection{Polytopal Bier spheres}

The following theorem was proved in \cite{Zivaljevic19}.

\begin{theo}\label{thm:Bier-polytope}
Bier sphere $Bier(T_{\mu_L<\nu})$ of a threshold complex 
$$
T_{\mu_L<\nu} := \{A\subset [n] \mid \mu_L(A) < \nu \} \subset 2^{[n]}
$$ 
is isomorphic to the boundary sphere of a convex polytope. More explicitly $Bier(T_{\mu_L<\nu})$
is isomorphic to the boundary sphere of the convex hull of the union of two simplices,
 \begin{equation}\label{eqn:conv-poly}
 Q_\alpha :=  {\rm Conv}(\Delta\cup\nabla_\alpha) = {\rm Conv}\{y_1,y_2,\dots, y_n, -\alpha y_1, -\alpha y_2, \dots, -\alpha y_n\}\, .
 \end{equation}
\end{theo}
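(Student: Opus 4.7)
The plan is to exhibit, for each threshold complex $T = T_{\mu_L<\nu}$ with weight vector $L=(L_1,\dots,L_n)$ and cutoff $\nu$, an explicit polytope $Q_\alpha$ of the form $\mathrm{Conv}(\Delta\cup \nabla_\alpha)$ whose boundary sphere is combinatorially isomorphic to the Bier sphere $Bier(T)$. The strategy is to analyse the facets of $Q_\alpha$ via supporting hyperplanes and to match the feasibility conditions for a facet with the combinatorial conditions defining facets of the deleted join $T\ast_\Delta T^\circ$.

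\emph{Setup.} Place the vertices $y_1,\dots,y_n$ in the hyperplane $H_0=\{x\in\mathbb R^n : x_1+\cdots+x_n=0\}$ with $\sum_i y_i=0$ (so the simplex $\Delta$ is centered at the origin, and $\nabla_\alpha=-\alpha\Delta$ is centered there as well). In the unweighted case $L_i\equiv 1$, the natural choice is $y_i=e_i-\mathbb 1/n$; for general weights, the vertices $y_i$ are deformed (equivalently the ambient inner product is reweighted) so that a distinguished supporting linear functional on $\{y_i\}$ realises the partial sums $\sum_{i\in A}L_i$ that appear in the threshold condition. The dilation factor $\alpha>0$ will play the role of a tunable parameter encoding the cutoff $\nu$.

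\emph{Facets of $Q_\alpha$.} Each facet of $Q_\alpha$ corresponds to a supporting linear functional $\phi\in H_0^*$: setting $c:=\max_i\phi(y_i)$, the vertices of $Q_\alpha$ lying on the facet are exactly $\{y_i : \phi(y_i)=c\} \cup \{-\alpha y_j : \phi(y_j)=-c/\alpha\}$. Writing $A,B\subseteq[n]$ for these index sets, one gets $A\cap B=\emptyset$, and the codimension-one (generic) facets satisfy $|A|+|B|=n-1$, leaving a unique ``middle'' index $k\in [n]\setminus(A\cup B)$. The identity $\sum_i \phi(y_i)=0$ (forced by $\sum y_i=0$) pins down $\phi(y_k)$ in terms of $c$, $\alpha$ and the positions of the $y_i$, and the strict inequalities $-c/\alpha<\phi(y_k)<c$ are precisely the feasibility constraints for $(A,B,\{k\})$ to index an honest facet of $Q_\alpha$.

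\emph{Threshold dictionary and conclusion.} The core step is to show that these feasibility inequalities are the threshold inequalities in disguise: with the proper placement of the $y_i$ (tied to $L$) and the proper relation between $\alpha$ and $\nu$, the condition ``$(A,B,\{k\})$ indexes a facet of $Q_\alpha$'' is equivalent to
\[
\sum_{i\in A}L_i<\nu \quad\text{and}\quad \sum_{i\in A}L_i + L_k\geq \nu,
\]
i.e.\ to $A\in T_{\mu_L<\nu}$ together with $A\cup\{k\}\notin T_{\mu_L<\nu}$, i.e.\ $B=[n]\setminus(A\cup\{k\})\in T_{\mu_L<\nu}^\circ$. Both directions — every threshold-admissible triple $(A,B,\{k\})$ is realised by a supporting $\phi$, and every facet of $Q_\alpha$ has this form — are to be verified. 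Once this bijection is in hand, $\partial Q_\alpha$ and $Bier(T_{\mu_L<\nu})$ are simplicial complexes with the same facet set, hence isomorphic, and $Q_\alpha$ is the sought polytopal realisation.

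\emph{Main obstacle.} The hard part is the threshold dictionary: finding the right vertex placement $y_i=y_i(L)$ and the right scalar $\alpha=\alpha(\nu,L)$ so that the purely geometric facet-feasibility inequalities of $Q_\alpha$ line up, uniformly over all partitions $(A,B,\{k\})$ of $[n]$, with the arithmetic threshold inequalities of $T_{\mu_L<\nu}$. It is precisely this ``on the nose'' matching that distinguishes threshold complexes from general simplicial complexes whose Bier spheres may fail to admit such a clean polytopal realisation, and I expect the bulk of the argument — including careful handling of degenerate cases where partial sums equal $\nu$ — to be concentrated there.
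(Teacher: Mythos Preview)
The paper does not actually prove this theorem; it is quoted from \cite{Zivaljevic19} and stated here without argument. What the paper does supply, in the paragraph immediately following the statement, are the explicit choices you leave open as your ``main obstacle'': the vertices are the radial perturbations $y_i = u_i/l_i$ of a regular simplex $\{u_i\}$ centered at the origin, and the dilation factor is $\alpha = (1-\nu)/\nu$. So your overall strategy --- analyse the facets of $Q_\alpha$ via supporting functionals and match the resulting feasibility inequalities with the threshold conditions defining facets of $Bier(T_{\mu_L<\nu})$ --- is the intended one, and the missing ingredients are handed to you rather than derived.

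One concrete point in your sketch needs repair. You write that $\sum_i y_i = 0$ and use the resulting identity $\sum_i \phi(y_i)=0$ to pin down $\phi(y_k)$. With the paper's weighted placement $y_i=u_i/l_i$ one has instead $\sum_i l_i y_i = 0$, hence the correct constraint on any linear functional is $\sum_i l_i\,\phi(y_i)=0$. This weighted relation is not a nuisance but the whole point: writing $t_i=\phi(y_i)$, with $t_i=c$ for $i\in A$, $t_j=-c/\alpha$ for $j\in B$, and the single leftover value $t_k$, the identity becomes
\[
c\,\mu_L(A)\;-\;\frac{c}{\alpha}\,\mu_L(B)\;+\;l_k t_k \;=\;0,
\]
and together with $\alpha=(1-\nu)/\nu$ and $\mu_L(A)+\mu_L(B)+l_k=1$ this is exactly what converts the geometric feasibility window $-c/\alpha<t_k<c$ into the pair $\mu_L(A)<\nu\le \mu_L(A)+l_k$. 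Once you use the weighted identity, your ``threshold dictionary'' goes through as planned; with the unweighted identity it would not.
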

Recall that the vertices of the simplex $\Delta := {\rm Conv}\{y_1,y_2,\dots, y_n\}$ are obtained as radial perturbation $y_i = {u_i}/{l_i}$ of the vertices of a regular simplex with vertices $u_i$ (centered at the origin), where $L = (l_1, l_2, \dots, l_n)$ is the vector of weights, associated to the probability measure $\mu_L$ and $\alpha = \frac{1-\nu}{\nu}$.

\medskip

\begin{cor}{\rm (\cite{jevtic_bier_2022}, Theorem 2.2)} $Bier(T_{\mu_L<\nu})$ is isomorphic to the boundary sphere of a convex polytope which can be realized as a polar dual of a generalized permutohedron.
\end{cor}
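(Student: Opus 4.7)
My plan is to take the explicit realization $Q_\alpha = \mathrm{Conv}(\Delta\cup\nabla_\alpha)$ from Theorem~\ref{thm:Bier-polytope} and verify that its polar dual $R := Q_\alpha^\circ$ satisfies the standard root-edge criterion for generalized permutohedra: every edge of $R$ must be parallel to a root $e_i - e_j$. Combined with $\partial Q_\alpha\cong Bier(T_{\mu_L<\nu})$, this exhibits the Bier sphere as the boundary sphere of the polar dual $R^\circ$ of the generalized permutohedron $R$.

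\textbf{Computing the polar.} Use the standard model $u_i = e_i - \tfrac{1}{n}\mathbbm{1}$ for the regular simplex inside $H_0 = \{x\in\mathbb{R}^n : \sum_j x_j = 0\}$, so that $y_i = u_i/l_i$. Identify the dual of $H_0$ with $H_0$ itself via the ambient inner product, and represent its elements by vectors $z$ with $\bar z := \tfrac{1}{n}\sum_j z_j = 0$. A direct calculation gives
\[
\langle y_i,\, z\rangle \;=\; \tfrac{1}{l_i}\sum_j \bigl(\delta_{ij} - \tfrac{1}{n}\bigr)z_j \;=\; \tfrac{z_i}{l_i},
\]
so the inequalities $\langle y_i, z\rangle\leqslant 1$ and $\langle -\alpha y_i, z\rangle\leqslant 1$ cutting out $R$ become
\[
-\,l_i/\alpha \;\leqslant\; z_i \;\leqslant\; l_i \qquad (i=1,\dots,n),
\]
so $R$ is the slice of the axis-aligned box $\prod_i[-l_i/\alpha,\, l_i]$ by the hyperplane $\sum_j z_j = 0$.

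\textbf{Edge directions are roots.} Every vertex of $R$ is obtained by fixing $z_i\in\{-l_i/\alpha,\, l_i\}$ for all $i$ outside a distinguished index $k$, with $z_k$ determined by the sum constraint (and admissible exactly when the resulting value lies in $[-l_k/\alpha,\, l_k]$). Two vertices are joined by an edge precisely when they differ in a single choice $z_i$; flipping $z_i$ from $-l_i/\alpha$ to $l_i$ forces $z_k$ to decrease by $l_i(1+1/\alpha)$ to preserve $\sum_j z_j = 0$, so the edge direction is a positive scalar multiple of $e_i - e_k$. Since every edge of $R$ lies in $H_0$ and is parallel to a root, the standard characterization of generalized permutohedra applies (the normal fan of $R$ coarsens the braid fan), and $R$ is a generalized permutohedron. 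Consequently $Q_\alpha \cong R^\circ$ realizes $Bier(T_{\mu_L<\nu})$ as the boundary sphere of the polar dual of a generalized permutohedron.

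\textbf{Main obstacle.} The algebra above is routine; the real content is recognising that the polar of the Cayley-type polytope $\mathrm{Conv}(\Delta\cup\nabla_\alpha)$ collapses into a hyperplane-truncated box whose edges automatically align with roots $e_i-e_k$ regardless of the weights $l_i$, thanks to the single linear sum constraint. The one technical point requiring verification is that the origin lies in the interior of $Q_\alpha$ (so that polarity is a bona fide combinatorial duality); this follows for the admissible parameter range $l_i>0$ and $0<\nu<1$ because the origin lies on the open segment joining the barycentres $\bar y$ and $-\alpha\bar y$ of $\Delta$ and $\nabla_\alpha$. As a sanity check, the uniform case $l_i\equiv 1,\ \alpha=1$ recovers the median hypersimplex/diplo-simplex pair $\Delta_{2k,k} = \Omega_{2k}^\circ$ from Theorem~\ref{thm:R_n=hypersimplex}.
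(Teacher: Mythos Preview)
The paper does not prove this corollary; it is quoted from \cite{jevtic_bier_2022} without argument, so there is no ``paper's own proof'' to compare against. Your computation of the polar $R=Q_\alpha^\circ=\{z\in H_0:\,-l_i/\alpha\leqslant z_i\leqslant l_i\}$ is correct, and identifying $R$ as a generalized permutohedron via its edge directions is the natural route.

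One step should be tightened. Your description of vertices (``fix $z_i$ at an endpoint for all $i\neq k$, with $z_k$ free'') and of edges (``adjacent vertices differ in a single choice $z_i$, with the same free index $k$'') presumes that $R$ is simple, equivalently that the hyperplane $\sum_j z_j=0$ misses every vertex of the box. This fails exactly when some $I\subseteq[n]$ satisfies $\mu_L(I)=\nu$; in particular your own sanity-check case $l_i\equiv 1,\ \alpha=1,\ n=2k$ yields the median hypersimplex, where every vertex of $R$ is already a vertex of the box and your description does not apply. Even in the simple case there is a second type of edge you omit, along which the free index changes (when $z_k$ hits an endpoint before the flipped $z_i$ does). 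The conclusion is nonetheless correct and the fix is short: along the relative interior of any edge of $R$ exactly $n-2$ of the box inequalities are active (fewer would leave a $\geqslant 2$-dimensional face, more would force a point), so the direction $v$ has $v_i=0$ for those $n-2$ indices and $\sum_i v_i=0$, giving $v\propto e_j-e_k$ for the two remaining indices. Alternatively, the substitution $w_i=z_i+l_i/\alpha$ turns $R$ into $\{w:\sum_i w_i=\text{const},\ 0\leqslant w_i\leqslant l_i(1+1/\alpha)\}$, visibly a polymatroid base polytope and hence a generalized permutohedron without any edge analysis.
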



\begin{thebibliography}{10}
\bibitem{aguiar2017}
Aguiar, M. and Ardila F., \emph{Hopf monoids and generalized permutahedra}. To appear in
Mem. Amer. Math. Soc. 2017. arXiv:1709.07504.


\bibitem{albertin_removahedral_2020}
Albertin, D., Pilaud V., and Ritter J., \emph{Removahedral congruences versus permutree congruences}. 2022.

\bibitem{ardila2010}
Ardila F., Benedetti C. and Doker J., \emph{Matroid polytopes and their volumes}. Discrete
Comput. Geom. 43.4 (2010), pp. 841–854.


\bibitem{Bier}
Bier T., \emph{A remark on Alexander duality and the disjunct join}, preprint (1992), 7pp.

\bibitem{bjorner_bier_2004}
Björner A., Paffenholz A., Sjöstrand J., and Ziegler, G. M., \emph{Bier spheres and posets}. Discrete and Computational Geometry, 34(1):71--86, 2004.

\bibitem{Bokowski-LNM}
Bokowski J., Sturmfels B., \emph{Computational Synthetic Geometry}, Lecture Notes in Mathematics (LNM), volume 1355, Springer 1989.

\bibitem{castillo_liu}
Castillo F., and Liu F., \emph{Deformation Cones of Nested Braid Fans}. International Mathematics Research Notices, 2022(3):1973--2026, 2020.

\bibitem{chapoton_associahedron_2002}
Chapoton F., Fomin S., and Zelevinsky, A., \emph{Polytopal realizations of generalized associahedra}. Canadian Mathematical Bulletin, 45(4):537--566, 2002.

\bibitem{CS91}
Conway J., and Sloane, N. J. A., \emph{The Cell Structures of Certain Lattices}. In Miscellanea Mathematica, pp. 71--107. Berlin: Springer, 1991.

\bibitem{cox-2011}
Cox, D. A., Little, J. B. and Schenck, H. K., \emph{Toric Varieties}. Vol. 124. Graduate Studies in Mathematics. American Mathematical Society, Providence, RI, 2011, pp. xxiv+841.

\bibitem{cukic-blowups-2007}
{\v{C}}uki{\'{c}}, S. L. and Delucchi, E., \emph{Simplicial shellable spheres via combinatorial blowups}. Proceedings of the American Mathematical Society, 135(08):2403--2415, apr 2007.


\bibitem{danilov}
Danilov, V. I. and Koshevoy, G. A., \emph{Cores of cooperative games, superdifferentials of functions, and the Minkowski difference of sets}. J. Math. Anal. Appl. 247.1 (2000), pp. 1–14.

\bibitem{delucchi_fundamental_2020}
Delucchi, E. and Hoessly, L.,
\emph{Fundamental polytopes of metric trees via parallel connections of matroids}. European Journal of Combinatorics, 87:103098, jun 2020.


\bibitem{edmonds1970}
Edmonds J., \emph{Submodular functions, matroids, and certain polyhedra}. Combinatorial Structures and their Applications. Gordon and Breach, New York, 1970, pp. 69–87.

\bibitem{ewald_geometry_1996}
Ewald, G., \emph{Combinatorial Convexity and Algebraic Geometry}. Springer New York, 1996.

\bibitem{fujishige2005}
Fujishige S., \emph{Submodular Functions and Optimization}. Second. Vol. 58. Annals of Discrete
Mathematics. Elsevier B. V., Amsterdam, 2005, pp. xiv+395.

\bibitem{polymake}
Gawrilow, E. and Joswig, M.,
\emph{polymake: a framework for analyzing convex polytopes}. In {\em Polytopes {\textemdash} Combinatorics and Computation}, 43--73. Birkhäuser Basel, 2000.

\bibitem{gvozdeva_games_2011}
Gvozdeva, T., Hemaspaandra, L. A., and Slinko, A.,
\emph{Three hierarchies of simple games parameterized by {\textquotedblleft}resource{\textquotedblright} parameters}. International Journal of Game Theory, 42(1):1--17, nov 2011.




\bibitem{jtz-bier-2019}
Jevti{\'{c}}, F. D., Timotijevi{\'{c}}, M., and {\v{Z}}ivaljevi{\'{c}}, R. T., \emph{Polytopal {Bier} spheres and {Kantorovich--Rubinstein} polytopes of weighted cycles}. Discrete {\&} Computational Geometry, 2019.

\bibitem{jevtic_bier_2022}
Jevti{\'{c}}, F. D. and {\v{Z}}ivaljevi{\'{c}}, R. T.,
\emph{Bier spheres of extremal volume and generalized permutohedra}. Appl. Anal. Discrete Math., 17 (2023), 101--119. \url{https://doi.org/10.2298/AADM211010026J}

\bibitem{Zivaljevic19}
Jevti{\'{c}}, F. D., Timotijevi{\'{c}}, M., and {\v{Z}}ivaljevi{\'{c}}, R. T.,
\emph{Polytopal Bier spheres and Kantorovich-Rubinstein polytopes of weighted cycles}, Disc. Comp. Geom. 65 (2021), 1275--1286; arXiv:1812.00397.

\bibitem{jojic_tverberg_2021}
Jojić, D., Panina, G., and Živaljević, R., \emph{A {Tverberg} type theorem for collectively unavoidable complexes}. Israel J. Math., Jan. 2021.

\bibitem{joswig2021}
Joswig M., Klimm M., and Spitz S., \emph{Generalized permutahedra and optimal auctions}. 2021. arXiv:2108.00979.

\bibitem{loera-2010}
De Loera, J. A., Rambau, J. and Santos, F., \emph{Triangulations: Structures for Algorithms and Applications}. Vol. 25. Algorithms and Computation in Math. Springer, 2010.

\bibitem{longueville_bier_2004}
de Longueville, M., \emph{Bier spheres and barycentric subdivision}. Journal of Combinatorial Theory, Series A, 105(2):355--357, 2004.

\bibitem{lutz_combinatorial_2007}
Lutz, F. H., \emph{Combinatorial 3-manifolds with 10 vertices}. 2007.

\bibitem{Matousek}
Matou\v{s}ek, J.,  \emph{Using the Borsuk--Ulam Theorem: Lectures on Topological Methods in Combinatorics and Geometry}, Universitext, Springer-Verlag, Berlin, 2003, 214pp.

\bibitem{mcmullen-1973}
McMullen, P., \emph{Representations of polytopes and polyhedral sets}. Geometriae Dedicata 2 (1973), pp. 83–99.

\bibitem{mcmullen-1987}
McMullen, P., \emph{Indecomposable convex polytopes}. Israel J. Math. 58.3 (1987), pp. 321–323.

\bibitem{meyer-1974}
Meyer, W., \emph{Indecomposable polytopes}. Trans. Amer. Math. Soc. 190 (1974), pp. 77–86.


\bibitem{morton2009}
Morton J., Pachter L., Shiu A., Sturmfels B. and Wienand O., \emph{Convex rank tests and
semigraphoids}. SIAM J. Discrete Math. 23.3 (2009), pp. 1117–1134.


\bibitem{PPP22}
Padrol, A., Pilaud, V. and Poullot, G., \emph{Deformation cones of hypergraph polytopes}. \emph{S\'{e}minaire Lotharingien de Combinatoire} 86B (2022).

\bibitem{PPP23}
Padrol, A., Pilaud, V. and Poullot, G., \emph{Deformation cones of graph associahedra and nestohedra}.
European Journal of Combinatorics, 107:103594, January 2023.

\bibitem{postnikov-2009}
Postnikov, A., \emph{Permutohedra, associahedra, and beyond}. Int. Math. Res. Not.  2009(6):1026--1106, 2009.

\bibitem{postnikov-2008}
Postnikov, A., Reiner, V. and Williams, L. K., \emph{Faces of generalized permutohedra}. Doc. Math. 13 (2008), pp. 207–273.

\bibitem{yost-2016}
Przesławski, K. and Yost, D., \emph{More indecomposable polyhedra}. Extracta Math. 31.2 (2016), pp. 169–188.

\bibitem{shepard-1963}
Shephard, G. C., \emph{Decomposable convex polyhedra}. Mathematika 10 (1963), pp. 89–95.

\bibitem{taylor_games_2000}
Taylor, A. D. and Zwicker W. S., \emph{Simple Games}. Princeton University Press, 2000.

\bibitem{timotijevic_dual_2019}
Timotijevi\'{c}, M., \emph{Note on combinatorial structure of self-dual simplicial complexes}. Matemati\v{c}ki Vesnik, 1(71):104--122, 2019.
  
\bibitem{TZJ24}
Timotijevi\'{c}, M. \v{Z}., \v{Z}ivaljevi\'{c}, R. T., and Jevti\'{c}, F. D., \emph{Polytopality of Simple Games}. Experimental Mathematics, 1--14, 2024. \href{https://doi.org/10.1080/10586458.2024.2379802}{https://doi.org/10.1080/10586458.2024.2379802}.

\end{thebibliography}

\end{document}